\newcommand{\hp}{\textsc{Hypergraph Transversal~}}
\newtheorem{theorem}{Theorem}[section]
\newtheorem{corollary}[theorem]{Corollary}
\newtheorem{lemma}[theorem]{Lemma}
\newtheorem{proposition}[theorem]{Proposition}
\newtheorem{observation}[theorem]{Observation}
\newtheorem{remark}[theorem]{Remark}
\theoremstyle{definition}
\begin{document}
\title{Well-Totally-Dominated Graphs}
\author{
Selim Bahadır\thanks{Department of Mathematics, Ankara Yıldırım Beyazıt University, Turkey. E-mail: sbahadir@ybu.edu.tr}
\and
Tınaz Ekim \thanks{Department of Industrial Engineering, Boğaziçi University, Turkey. E-mail: tinaz.ekim@boun.edu.tr}
\and
Didem Gözüpek\thanks{Department of Computer Engineering, Gebze Technical University, Kocaeli, Turkey. E-mail: didem.gozupek@gtu.edu.tr}\and
}
\date{\today}

\maketitle
\pagenumbering{roman} \setcounter{page}{1}

\pagenumbering{arabic} \setcounter{page}{1}

\begin{abstract}
\noindent
A subset of vertices in a graph is called a total dominating set if every vertex of the graph is adjacent to at least one vertex of this set. A total dominating set is called minimal if it does not properly contain another total dominating set. In this paper, we study graphs whose all minimal total dominating sets have the same size, referred to as well-totally-dominated (WTD) graphs. We first show that WTD graphs with bounded total domination number can be recognized  in polynomial time. Then we focus on WTD graphs with total domination number two. In this case, we characterize triangle-free WTD graphs and WTD graphs with packing number two, and we show that there are only finitely many planar WTD graphs with minimum degree at least three. Lastly, we show that if the minimum degree is at least three then the girth of a WTD graph is at most 12. We conclude with several open  questions.
\end{abstract}

\noindent
{\it Keywords: Total domination, well-totally-dominated graphs, minimal total dominating sets.} \\


\section{Introduction}
\label{sec:intro}

Total domination in graphs has been extensively studied in the literature (see \cite{HeYe13}) and has numerous applications. For instance, consider a computer network where a core group of file servers has the ability to communicate directly with every computer outside the core group. Moreover, each file server is directly linked to at least one other backup file server where duplicate information is stored. This core group of servers corresponds to a total dominating set in the graph representing the computer network. Another application area is a specific committee selection mechanism such that every non-member of the committee knows at least one member of the committee and every member of the committee knows at least one other member of the committee to avoid feelings of isolation and thus enhance cooperation (see \cite{haynes:1998}). 

Let $G$ be a graph with no isolated vertices.
A subset $S$ of $V(G)$ is called a \emph{total dominating set} (TDS) of $G$ if every vertex in $G$ is adjacent to at least one element in $S$.
A total dominating set is \emph{minimal} if it contains no other TDS of $G$. The minimum size of a total dominating set of a graph $G$ is called the \emph{total domination number} and denoted by $\gamma_{t}(G)$, while the maximum size of a minimal total dominating set is called the \emph{upper total domination number} and denoted by $\Gamma_{t}(G)$. $G$ is called \emph{well-totally-dominated} (WTD) if every minimal TDS of $G$ is of the same size, that is, $\gamma_t(G)=\Gamma_t(G)$.
WTD graphs with $\gamma_t=k$ are denoted by WTD($k$).

Given a graph, computing its total domination number and its upper total domination number are NP-hard in general \cite{pfaff:1984, fang:2004} and already NP-hard even in specific graph classes such as bipartite graphs, comparability graphs and claw-free graphs \cite{HeYe13}. One way to deal with such a problem is to consider ``trivial" instances where these two paramaters have the same value. Examples of graph classes defined in this way in the literature include well-covered graphs (whose all maximal independent sets have the same size), well-dominated graphs (whose all minimal dominating sets have the same size), and equimatchable graphs (whose all maximal matchings have the same size). Structural properties of each one of these graph classes have been studied extensively in the literature.  In this paper, we take the same approach for the total dominating sets. Works on total domination in the literature mostly focused on the relation of the total domination number with other graph parameters and characterized graphs with total domination number being equal to an upper bound (e.g. \cite{CDH80, BCV20}). 
Inequalities relating the total domination number to other domination parameters and characterization of graphs that tightly attain these bounds have also been studied (see \cite{HLX10, BaGo18}).

Clearly, if the total domination number and the upper total domination number are polynomial time solvable for a given class of graphs, then the recognition of WTD graphs belonging to this class of graphs is polynomial. However, the complexity of recognizing WTD graphs in general is unknown. In such a situation, a classical approach consists in studying the structure of WTD graphs in restricted graph classes and providing structural characterizations along with efficient recognition algorithms whenever possible.

WTD graphs were initially introduced in \cite{HaRa97}, where WTD cycles and paths are characterized and several constructions of WTD trees are given. They also proved that a WTD graph with minimum degree at least two has girth at most 14. The work in \cite{FFV09} focused on the composition and decomposition of WTD trees and proved that any WTD tree can be constructed from a family of three small trees. To the best of our knowledge, \cite{HaRa97} and \cite{FFV09} are the only work on WTD graphs.
A graph class resembling WTD graphs is \emph{well-dominated graphs}, which are graphs whose minimal dominating sets have the same size. 
It is known that well-dominated graphs form a proper subset of well-covered graphs \cite{FHN88}. 
We note that well-covered graphs are graphs whose maximal independent sets have the same size and there is a rich literature about them (see \cite{Plu93, Har99}). 
Well-dominated graphs were introduced by Finbow et.al. \cite{FHN88}, who provided a characterization of bipartite well-dominated graphs and well-dominated graphs with girth at least 5. 
Characterizations of these graphs within other graph classes were also obtained \cite{ToVo90, GKS11, FiBo15, LeTa17}. 
Although their definitions resemble each other, there is not a containment relationship between WTD graphs and well-dominated graphs. 
For instance, a cycle on six vertices is WTD but not well-dominated, whereas the graph $T_{10}$ described in \cite{LeTa17} is well-dominated but not WTD.

It follows from the previous studies on WTD graphs that we do not know much about their structure. 
In this paper, we investigate the study of WTD graphs from a structural point of view. We first study WTD graphs with bounded total domination number. We prove in Section \ref{sec:recWTD} that the recognition of WTD graphs with total domination number $k$ is solvable in polynomial time for every positive integer $k$. We then focus on WTD graphs with total domination number $2$, referred to as WTD(2) graphs in Section \ref{sec:WTDtwo}. We characterize triangle-free WTD(2) graphs and WTD(2) graphs with packing number 2 (or equivalently of diameter 3). We also show that there is a finite number of planar WTD(2) graphs with minimum degree at least 3. Subsequently, we study the girth of WTD graphs in Section \ref{sec:girthWTD}. In particular, building on a result in \cite{HaRa97}, we prove that WTD graphs with minimum degree at least three has girth at most 12. Finally, we discuss several open research directions.

\section{WTD Graphs with Bounded Total Domination Number}\label{sec:recWTD}

Recall that the complexity of recognizing WTD graphs is unknown. In this section, we show that for any positive integer $k$, WTD($k$) graphs can be recognized in polynomial time. 
To this end, we will use an equivalent description of WTD($k$) graphs using transversal hypergraphs. 
Let us first introduce necessary definitions. 
A \emph{transversal} (or \emph{hitting set}) of a hypergraph $H = (X, E)$ is a set $T\subseteq X$ that has nonempty intersection with every edge of $H$.
A transversal of a collection of sets is a transversal of the hypergraph whose hyperedges are the given collection.
A transversal $T$ is called minimal if no proper subset of $T$ is a transversal.
The \emph{transversal hypergraph} of $H=(X,E)$ is the hypergraph $H^*=(X, F)$ whose edge set $F$ consists of all minimal transversals of $H$.

Let $G$ be a graph with no isolated vertex.
Let $H_G$ be the hypergraph whose vertex set is $V(G)$ and hyperedges are open neighborhoods of the vertices of $G$.
Let also $MTDS(G)$ denote the set of all minimal total dominating sets of $G$.

Let $T$ be a hyperedge of $H_G^*$, that is a minimal transversal of the set of open neighborhoods of $G$. This means that $T$ contains a neighbor of every vertex in $G$, thus it is a total dominating set. By minimality of the transversal $T$, it is also a minimal total dominating set of $G$. Conversely, every vertex of $G$ is adjacent to at least one element in a total dominating set TDS. Thus every TDS contains at least one vertex from every open neighborhood of the vertices of $G$

\begin{lemma}\label{lem:hg}
$MTDS(G)$ consists of hyperedges of the transversal hypergraph of $H_G$.
\end{lemma}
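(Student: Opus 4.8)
The plan is to prove the two inclusions separately, essentially formalizing the two paragraphs of discussion that precede the statement. The claim is the set equality $MTDS(G) = F$, where $H_G^* = (V(G), F)$ and $F$ is the set of minimal transversals of the collection $\{N(v) : v \in V(G)\}$ of open neighborhoods.

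First I would show that every hyperedge of $H_G^*$ lies in $MTDS(G)$. Let $T$ be a minimal transversal of $\{N(v) : v \in V(G)\}$. By definition of transversal, $T \cap N(v) \neq \emptyset$ for every $v \in V(G)$; equivalently, every vertex $v$ has a neighbor in $T$, which is precisely the definition of a total dominating set. So $T$ is a TDS. For minimality, suppose $T' \subsetneq T$ were also a TDS of $G$. Then every vertex $v$ has a neighbor in $T'$, i.e. $T' \cap N(v) \neq \emptyset$ for all $v$, so $T'$ is a transversal properly contained in $T$, contradicting minimality of the transversal $T$. Hence $T \in MTDS(G)$.

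Conversely, I would show that every minimal TDS is a minimal transversal. Let $S \in MTDS(G)$. Since $S$ is a TDS, every vertex $v$ has a neighbor in $S$, so $S \cap N(v) \neq \emptyset$ for all $v$, meaning $S$ is a transversal of $\{N(v) : v \in V(G)\}$. For minimality of the transversal, suppose $S' \subsetneq S$ is also a transversal; then $S' \cap N(v) \neq \emptyset$ for all $v$, so every vertex has a neighbor in $S'$ and $S'$ is a TDS properly contained in $S$, contradicting minimality of the TDS $S$. Thus $S$ is a minimal transversal, i.e. $S \in F$.

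The two containments give $MTDS(G) = F$, which is the statement. There is no real obstacle here: the only thing to be careful about is the bookkeeping of the equivalence ``$S$ is a TDS'' $\iff$ ``$S$ meets every open neighborhood,'' and that the notions of minimality (no proper subset is a TDS / no proper subset is a transversal) translate along this equivalence, which they do because the equivalence is a biconditional that holds verbatim for any subset of $V(G)$, in particular for $S'$ and $T'$. (One should also note that $G$ has no isolated vertex, so the open neighborhoods are all nonempty and $V(G)$ itself is a transversal, guaranteeing that minimal transversals exist; this matches the standing assumption on $G$.)
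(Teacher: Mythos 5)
Your proposal is correct and follows essentially the same route as the paper's proof: both directions rest on the equivalence ``$S$ is a TDS iff $S$ meets every open neighborhood,'' with minimality transferred along it; you simply spell out the minimality step by contradiction, which the paper leaves implicit.
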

\begin{proof}
Let $T$ be a hyperedge of $H_G^*$, that is a minimal transversal of the set of open neighborhoods of $G$. 
This means that $T$ contains a neighbor of every vertex in $G$, thus it is a total dominating set. 
By minimality of the transversal $T$, it is also a minimal total dominating set of $G$. 
Conversely, let $S$ be a minimal total dominating set of $G$.
Then, every vertex in $G$ is adjacent to at least one vertex in $S$, that is, $S$ has a nonempty intersection with every open neighborhood in $G$.
Therefore, $S$ is a transversal of the hypergraph $H_G$ and minimality of $S$ implies that it is a minimal transversal, thus $S$ is a hyperedge of $H_G^*$.
\end{proof}

\begin{proposition}\label{prop:transmtdsg}
Let $G$ be a graph.
Then, for any minimal transversal $T$ of $MTDS(G)$, there exists a vertex $v$ in $G$ such that $N(v)=T$.
\end{proposition}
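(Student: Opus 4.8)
The plan is to use Lemma~\ref{lem:hg}, which identifies $MTDS(G)$ with the edge set of the transversal hypergraph $H_G^*$, and then to argue directly from the two minimality hypotheses in play (that $T$ is a minimal transversal of $MTDS(G)$, and that each member of $MTDS(G)$ is a minimal transversal of $H_G$), rather than quoting the general hypergraph duality $H^{**}=\min(H)$.

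First I would show that $T$ contains the open neighborhood of some vertex. Suppose it does not; then every hyperedge $N(v)$ of $H_G$ contains a vertex outside $T$, so $V(G)\setminus T$ is a transversal of $H_G$ and therefore contains a minimal transversal $S$ of $H_G$. By Lemma~\ref{lem:hg} we have $S\in MTDS(G)$, but $S\subseteq V(G)\setminus T$ gives $S\cap T=\emptyset$, contradicting that $T$ meets every member of $MTDS(G)$. Hence $N(v)\subseteq T$ for some $v\in V(G)$.

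It then remains to rule out $N(v)\subsetneq T$. If $x\in T\setminus N(v)$, then by minimality of $T$ the set $T\setminus\{x\}$ is not a transversal of $MTDS(G)$, so there is $M\in MTDS(G)$ with $M\cap(T\setminus\{x\})=\emptyset$; since $T$ is a transversal it still meets $M$, whence $M\cap T=\{x\}$. But $N(v)\subseteq T$ together with $x\notin N(v)$ forces $M\cap N(v)=\emptyset$, i.e.\ $v$ has no neighbor in $M$, which is impossible since $M$ is a total dominating set. Therefore $T=N(v)$.

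The argument is short, so I do not expect a genuine obstacle; the one subtle point is the last step, where minimality of $T$ must be invoked by deleting an element chosen \emph{outside} $N(v)$, so that the resulting witness $M$ is seen to avoid all of $N(v)$. One should also note that the standing hypothesis that $G$ has no isolated vertex is what guarantees each $N(v)$ is nonempty, which is used implicitly in the first step when asserting that $V(G)\setminus T$ (rather than something empty) is the transversal that does the work.
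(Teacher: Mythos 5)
Your proposal is correct and follows essentially the same route as the paper: first extract a vertex $v$ with $N(v)\subseteq T$ by observing that $V(G)\setminus T$ cannot be a (total dominating) transversal, then use minimality of $T$ by deleting an element of $T\setminus N(v)$ to produce a member of $MTDS(G)$ missing $N(v)$, a contradiction. The only cosmetic differences are that you phrase the first step via transversals of $H_G$ and Lemma~\ref{lem:hg} instead of speaking directly of total dominating sets, and you include the superfluous observation $M\cap T=\{x\}$; the substance is identical.
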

\begin{proof}
Let $MTDS(G)=\{A_1,\dots,A_m\}$.
Since $T$ has nonempty intersection with each $A_i$,
$V(G)\backslash T$ contains none of the minimal total dominating sets $A_i$s.
Therefore, $V(G)\backslash T$ is not a TDS of $G$,
and hence there exists at least one vertex $v\in V(G)$ such that $N(v)\cap V(G)\backslash T=\emptyset$. 
Thus, we see that $N(v)\subseteq T$.
Suppose that $N(v)\neq T$.
Then $T\backslash N(v)\neq \emptyset$ and let $u\in T\backslash N(v)$.
Since $T$ is a minimal transversal, $T\backslash \{u\}$ is disjoint with at least one of $A_i$s, say $A_1$.
As $u\in T\backslash N(v)$, we have $N(v) \subseteq T\backslash \{u\}$,
and hence $N(v)\cap A_1=\emptyset$, that is, $v$ is not dominated by $A_1$, contradiction.
Therefore, $N(v)=T$.
\end{proof}

A hypergraph $H$ is said to be \emph{Sperner} if no hyperedge of $H$ contains another hyperedge.
The following result shows that any finite collection of finite sets which forms a Sperner hypergraph corresponds to the set of all minimal total dominating sets of a graph.

\begin{proposition}\label{prop:spernermtdsg}
Let $H$ be a Sperner hypergraph.
Then there exists a graph $G$ such that $E(H)=MTDS(G)$.
\end{proposition}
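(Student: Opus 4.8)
The plan is to construct $G$ directly from the hyperedges of $H$, using Proposition~\ref{prop:transmtdsg} as a guide: it tells us that every minimal transversal of $MTDS(G)$ must appear as an open neighborhood in $G$, so the natural idea is to take the transversal hypergraph of $H$ and realize \emph{its} minimal transversals as the open neighborhoods of the vertices of $G$. Recall from the theory of transversal hypergraphs that for a Sperner hypergraph $H$ one has $(H^*)^* = H$, i.e.\ the minimal transversals of the minimal transversals of $H$ are exactly the hyperedges of $H$ again. So if we can build a graph $G$ whose open neighborhoods are precisely the hyperedges of $H^*$ (the minimal transversals of $H$), then by Lemma~\ref{lem:hg} the minimal TDSs of $G$ are the minimal transversals of $H_G$, and $H_G$ is exactly $H^*$, so $MTDS(G) = (H^*)^* = H$, as desired.

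The first step, then, is to take the ground set $X$ of $H$, form the transversal hypergraph $H^* = (X, F)$, and attempt to define a graph on vertex set $X$ (possibly with extra vertices) so that the open neighborhoods are exactly the members of $F$. The obstruction is that an arbitrary collection $F$ of subsets of $X$ is generally not realizable as the set of open neighborhoods of a graph on $X$: open neighborhoods satisfy the symmetry constraint $u \in N(v) \iff v \in N(u)$ and there is one neighborhood per vertex, whereas $|F|$ need not equal $|X|$ and $F$ need not respect any symmetry. The standard fix is to enlarge the vertex set: introduce a fresh vertex $x_T$ for each hyperedge $T \in F$ and set $N(x_T) = T \subseteq X$; this forces each original vertex $u \in X$ to be adjacent exactly to those $x_T$ with $u \in T$, i.e.\ $N(u) = \{\, x_T : u \in T, T \in F\,\}$. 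Then we must check two things: (i) the neighborhoods of the original vertices $u$, which are subsets of the new vertex set, do not interfere (they are automatically a different ``side''), and (ii) crucially, the hypergraph of \emph{all} open neighborhoods of this bipartite-like graph $G$ has the same minimal transversals as we want. This is where care is needed, because $H_G$ now has hyperedges of two types: the sets $T \in F$ (as $N(x_T)$) and the sets $N(u) = \{x_T : u \in T\}$ living on the new vertices.

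The main obstacle, and the heart of the argument, is therefore reconciling these two families so that $MTDS(G)$ comes out equal to $E(H)$ rather than something larger or smaller. One must verify that every minimal transversal of $H_G$ is a subset of $X$ (none of the new vertices $x_T$ is needed, or rather a minimal one avoids them) and equals some hyperedge of $(H^*)^* = H$; conversely that each hyperedge of $H$, viewed as a subset of $X \subseteq V(G)$, is indeed a minimal total dominating set of $G$ — it must dominate the new vertices $x_T$ (which it does iff it meets every $T \in F$, i.e.\ iff it is a transversal of $H^*$, which every hyperedge of $H$ is since $(H^*)^* = H$) and it must dominate the original vertices $u$ (which requires that for each $u$ there is some $x_T$ in the set with $u \in T$; one checks this follows from $H$ being Sperner together with the definition of $F$). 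I would also need to confirm $G$ has no isolated vertices, handle trivial degenerate cases (e.g.\ $H$ empty, or a hyperedge being a singleton), and possibly add a few pendant gadgets to kill unwanted minimal transversals involving the $x_T$'s. The bookkeeping to show exactly $MTDS(G) = E(H)$ — closing the loop via the double-transversal identity $(H^*)^* = H$ for Sperner $H$ — is the step I expect to be most delicate.
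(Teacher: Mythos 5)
Your overall strategy (attach a new vertex $x_T$ with $N(x_T)=T$ for every minimal transversal $T$ of $H$) is the same device the paper uses, but your concrete construction has a fatal omission: you put \emph{no edges inside the ground set} $X$. You explicitly force each original vertex $u\in X$ to be adjacent exactly to the vertices $x_T$ with $u\in T$, so your graph is bipartite between $X$ and the new vertices. In such a graph a hyperedge $A\in E(H)$, being a subset of $X$, cannot totally dominate any vertex of $X$ (all neighbours of $X$-vertices lie on the other side), so $A$ is not even a total dominating set; worse, every TDS must contain vertices from both sides, so every minimal TDS contains some $x_T$'s and $MTDS(G)\neq E(H)$ no matter how the minimality bookkeeping goes. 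This is not the ``delicate bookkeeping'' you defer to the end, and no pendant gadgets will fix it: the sets you want to be the minimal TDSs simply are not TDSs in your graph. The paper's proof avoids this precisely by first adding edges among the ground-set vertices $A=\cup_i A_i$ so that every vertex of $A$ has a neighbour in each $A_i$ (e.g.\ by making $A$ a clique), and only then attaching the vertices $v_T$; with those internal edges each $A_i$ becomes a TDS, every TDS must contain some $A_i$ (else its complement contains a minimal transversal $T$ and $v_T$ is undominated), and minimality of each $A_i$ follows from the Sperner condition.

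A secondary point: your detour through the double-transversal identity $(H^*)^*=H$ is unnecessary. Once the internal edges are present, the verification is direct, exactly as in the paper: one never needs to identify $H_G$ with $H^*$, only to use Proposition~\ref{prop:transmtdsg} as motivation for which neighborhoods $N(v_T)=T$ must be created. Also note that the condition you hope follows ``from $H$ being Sperner'' (that a hyperedge of $H$ dominates the $X$-side via some $x_T$ it contains) cannot hold in your setup, since hyperedges of $H$ contain no vertices $x_T$ at all; the Sperner hypothesis is needed elsewhere, namely to show that no $A_i$ properly contains another TDS.
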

\begin{proof}
Let $E(H)=\{A_1,\dots, A_m \}$ and $A=\cup _{i=1}^m A_i$.
Consider a graph with vertex set $A$ and draw edges between its vertices such that each vertex is adjacent to at least one vertex in $A_i$ for all $i=1,\dots,m$ (for example, draw all possible edges).
Then, in accordance with Proposition \ref{prop:transmtdsg}, for each minimal transversal $T$ of $H$, add a vertex $v_T$ to the graph such that $N(v_T)=T$.
Let $G$ be the resulting graph.

We first show that each $A_i$ is a TDS of $G$.
By construction, every vertex of $A$ is adjacent to at least one vertex in $A_i$.
Moreover, for every minimal transversal $T$ of $A_1,\dots,A_m$ we have $T\cap A_i\neq \emptyset$,
and hence, each $v_T$ is dominated by $A_i$.
Therefore, $A_i$ is a TDS for $i=1,\dots,m$.

We next show that every TDS of $G$ contains at least one of $A_i$s.
Let $S$ be a TDS of $G$ and suppose that $A_i \nsubseteq S$ for $i=1,\dots,m$.
Then, $V(G)\backslash S$ is a transversal of $A_1,\dots,A_m$, and hence,
there exists a minimal transversal $T$ of $A_1,\dots,A_m$ such that $T\subseteq V(G)\backslash S$.
On the other hand, we have $N(v_T)=T$ and thus, we get $N(v_T)\cap S=\emptyset$, which contradicts with $S$ being a TDS of $G$.

Consequently, a set other than $A_1,\dots,A_m$ can not be a minimal TDS of $G$.
We finally show that each $A_i$ is a minimal TDS of $G$.
Suppose that $A_i$ is not minimal for some $i$.
Then, $A_i\backslash \{x\}$ is still a TDS of $G$ for some $x\in A_i$, and therefore,
$A_j \subseteq A_i\backslash \{x\} $ for some $j$, which implies $A_j \subseteq A_i$, contradiction to $H$ being Sperner.
Therefore, minimal TDSs of $G$ are exactly $A_1,\dots, A_m$.
\end{proof}

\begin{remark}
One can extend $G$ to another graph whose minimal TDSs are $A_1,\dots, A_m$ as follows:
Let $G'$ be a graph disjoint from $G$.
Draw edges between the vertices of $G'$ and $A$ in such a way that every vertex of $G'$ is adjacent to at least one vertex of $A_i$ for $i=1,\dots,m$.
By following the same arguments, it is easy to check that minimal TDSs of the resulting graph are $A_1,\dots,A_m$.
\end{remark}

Notice that any finite collection consisting of distinct sets of size $k$
corresponds to a Sperner hypergraph and therefore, 
Proposition \ref{prop:spernermtdsg} implies the following result.

\begin{corollary}\label{cor:infwtdk}
For every integer $k\geq 2$, 
WTD($k$) is an infinite graph family.	
\end{corollary}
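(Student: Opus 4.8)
The plan is to produce, for each fixed $k\ge 2$, infinitely many graphs belonging to WTD($k$) by feeding suitable hypergraphs into Proposition \ref{prop:spernermtdsg}. The key point is the observation made just before the statement: if $\mathcal{A}=\{A_1,\dots,A_m\}$ is any finite family of pairwise distinct sets, each of cardinality exactly $k$, then no $A_i$ is contained in another $A_j$, since two finite sets of equal size are incomparable under inclusion unless they coincide. Hence the hypergraph with edge set $\mathcal{A}$ is Sperner.

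First I would apply Proposition \ref{prop:spernermtdsg} to such a family $\mathcal{A}$ to obtain a graph $G$ with $MTDS(G)=\mathcal{A}$. Every minimal total dominating set of $G$ then has size $k$, so $\gamma_t(G)=\Gamma_t(G)=k$ and $G$ belongs to WTD($k$); the hypothesis $k\ge 2$ causes no trouble, since no graph admits a total dominating set of size one. To turn this into an infinite family, I would choose the families $\mathcal{A}$ so that the resulting graphs have unbounded order: for each integer $m\ge 1$, let $\mathcal{A}_m$ consist of $m$ pairwise disjoint $k$-element sets. Then the graph $G_m$ produced by Proposition \ref{prop:spernermtdsg} has a distinct vertex for every element of $\bigcup_{i=1}^m A_i$, so $|V(G_m)|\ge mk$. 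Therefore WTD($k$) contains graphs of arbitrarily large order and is an infinite family.

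I do not expect a genuine obstacle, as the statement is essentially a direct corollary of Proposition \ref{prop:spernermtdsg}. The only steps that need a line of justification are the Sperner property of $\mathcal{A}$, which is immediate from the common cardinality, and the passage from a single realizing graph to infinitely many, which is why the argument exhibits a sequence $\mathcal{A}_m$ with $|V(G_m)|\to\infty$ rather than invoking the construction just once.
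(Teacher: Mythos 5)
Your proposal is correct and follows essentially the same route as the paper: the paper also observes that any finite collection of distinct $k$-element sets is Sperner and invokes Proposition \ref{prop:spernermtdsg} to realize it as $MTDS(G)$, yielding WTD($k$) graphs. Your explicit choice of $m$ pairwise disjoint $k$-sets to force $|V(G_m)|\to\infty$ just makes the ``infinitely many'' conclusion fully explicit, which the paper leaves implicit.
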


The \hp problem is the decision problem that takes as input two
Sperner hypergraphs $H$ and $H'$ and asks whether $H'$ is the transversal hypergraph $H^*$ of $H$. 

\begin{theorem}[\cite{eiter:1995}, \cite{boros:1998}]\label{thm:mintransk}
For every positive integer $k$,
the \hp problem is solvable in polynomial time if all hyperedges of one of the two
hypergraphs $H$ and $H'$ are of size at most $k$.
\end{theorem}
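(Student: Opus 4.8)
The plan is to reduce the question ``$H'=H^*$?'' to the search for a single combinatorial object and then to carry out that search efficiently, exploiting the bounded edge size in the spirit of the recursive dualization algorithms of Fredman and Khachiyan. First note that, since $H$ and $H'$ are Sperner, one has $H'=H^*$ if and only if $H=(H')^*$, because $(H^*)^*=H$ for Sperner hypergraphs; hence the two hypergraphs play symmetric roles and we may assume throughout that it is $H$ whose hyperedges all have size at most $k$.

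I would begin with the easy inclusion $H'\subseteq H^*$, which can be tested directly: for each $e'\in H'$ one checks that $e'$ meets every (small) hyperedge of $H$, and that for each $v\in e'$ there is a hyperedge of $H$ meeting $e'$ only in $v$, so that $e'$ is a \emph{minimal} transversal. If some $e'$ fails, then $H'\neq H^*$; otherwise $H'\subseteq H^*$ and, in particular, every hyperedge of $H$ meets every hyperedge of $H'$. What remains is to decide whether $H^*$ has a hyperedge outside $H'$, i.e.\ whether $H$ has a minimal transversal not already in $H'$. Complementing, this is equivalent to the existence of a \emph{gap}: a set $S$ of vertices that contains no hyperedge of $H$ and whose complement contains no hyperedge of $H'$ (equivalently, $S$ itself is a transversal of $H'$). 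Given $H'\subseteq H^*$ and the Sperner property, the absence of a gap is exactly the statement $H'=H^*$; the one-line check in each direction is that $X\setminus S$ is a transversal of $H$ and that a minimal transversal inside it, if any, cannot lie in $H'$ without meeting $S$.

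Next I would search for a gap by recursion. Choose a vertex $v$ and split on whether $v\in S$. If $v\in S$, every hyperedge of $H$ through $v$ may be replaced by itself with $v$ deleted (an emptied hyperedge makes the branch infeasible) and every hyperedge of $H'$ through $v$ is satisfied and discarded; if $v\notin S$, every hyperedge of $H$ through $v$ becomes irrelevant and is discarded, while $v$ is deleted from every hyperedge of $H'$ (again an emptied hyperedge kills the branch). Each branch strictly simplifies the instance, and the original instance has a gap if and only if at least one of the two sub-instances does; the recursion bottoms out at instances with no hyperedges on one side, which are settled trivially.

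The main obstacle is to bound the size of this recursion tree by a polynomial in the input, since an arbitrary choice of branching vertex only yields an exponential bound; this is precisely what the cited works \cite{eiter:1995,boros:1998} achieve, and two observations drive the analysis. First, picking any hyperedge $e$ of $H$, which has size at most $k$ and meets every hyperedge of $H'$, some vertex of $e$ lies in at least a $1/k$ fraction of the hyperedges of $H'$; branching on such a frequent vertex forces a geometric drop in $|E(H')|$ along one direction. Second, the bounded-size hypothesis caps the number of times any fixed hyperedge of $H$ can be shortened before it disappears, which limits how long the other direction can persist; a careful potential-function accounting combining these two effects yields a recursion tree of polynomial size, and hence a polynomial-time algorithm. (Alternatively, the same ingredients give a polynomial-delay procedure enumerating the minimal transversals of $H$; one runs it, halting as soon as it outputs a set not in $H'$ — answering \emph{no} — or until it terminates, in which case one compares the output list with $E(H')$; since the first case occurs within $|E(H')|+1$ iterations, the whole test is polynomial.)
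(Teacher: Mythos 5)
This statement is quoted by the paper from \cite{eiter:1995} and \cite{boros:1998} without proof, so there is no internal argument to compare against; the question is whether your outline actually constitutes a proof, and it does not. Your reductions are fine: the test $H'\subseteq H^*$ (cross-intersection plus the ``private hyperedge'' criterion for minimality), the equivalence of $H'\neq H^*$ with the existence of a gap set $S$ containing no edge of $H$ and meeting every edge of $H'$, and the two-way branching on a vertex $v$ are all standard and correctly stated. But the entire content of the theorem is the polynomial bound on this search, and that is exactly the step you wave away (``a careful potential-function accounting\dots yields a recursion tree of polynomial size \dots this is precisely what the cited works achieve''). As sketched, the accounting does not go through: branching on a vertex of frequency at least $1/k$ in $H'$ shrinks $|E(H')|$ geometrically only in the branch $v\in S$, while in the branch $v\notin S$ the edges of $H$ through $v$ are \emph{deleted}, not shortened, and $|E(H')|$ need not decrease at all; so your second observation (``any fixed hyperedge of $H$ can be shortened at most $k$ times'') does not control the persistent branch. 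This frequency argument is precisely the Fredman--Khachiyan scheme, which in general yields only quasi-polynomial time; extracting a genuinely polynomial bound when one side has edges of size at most $k$ requires the specific (and nontrivial) arguments of Eiter--Gottlob or Boros--Gurvich--Hammer, which you invoke rather than reproduce.

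The same applies to your alternative route: polynomial-delay (or incremental-polynomial) generation of minimal transversals for hypergraphs with bounded edge size is itself a nontrivial theorem from the same literature, so halting the generator after $|E(H')|+1$ outputs again presupposes the result you are asked to prove. In short, your proposal is an accurate description of the known proof strategy and of how the theorem is used, but as a standalone proof it has a genuine gap at its core step; for the purposes of this paper the honest treatment is what the authors do, namely cite \cite{eiter:1995,boros:1998}.
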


Theorem \ref{thm:mintransk} has the following consequence: 
\begin{corollary}\label{cor:wtdrec}
(\cite{gozupek:2017}) For every positive integer $k$,
the following problem is solvable in polynomial time: Given a Sperner hypergraph $H$, determine whether all minimal transversals of $H$ are of size $k$.
\end{corollary}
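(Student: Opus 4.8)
The plan is to reduce the question, in polynomial time, to a single instance of the \hp problem and then invoke Theorem~\ref{thm:mintransk}. Fix $k$ and let $H=(X,E)$ be the given Sperner hypergraph, with $|X|=n$. The key observation is that the answer to our problem is ``yes'' precisely when every hyperedge of the transversal hypergraph $H^*$ has size $k$; equivalently, when $H^*$ equals the hypergraph whose edges are exactly the size-$k$ minimal transversals of $H$. We never want to build $H^*$ explicitly (it may be exponentially large), but the \hp problem is tailor-made for testing equality with it.

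Concretely, I would first construct the hypergraph $H'=(X,F)$ where $F$ is the collection of all $k$-element subsets $S\subseteq X$ that are minimal transversals of $H$. For a fixed candidate $k$-set $S$, membership in $F$ is decidable in polynomial time: check that $S$ meets every hyperedge of $H$ (so $S$ is a transversal), and then check, for each $x\in S$, that $S\setminus\{x\}$ misses some hyperedge of $H$; the latter certifies minimality, since any transversal properly contained in $S$ would be contained in some $S\setminus\{x\}$. Because $k$ is fixed there are only $\binom{n}{k}$ candidates, so $F$, and hence $H'$, is built in polynomial time. Note that $H'$ is Sperner, since all its hyperedges have the common size $k$ and are pairwise distinct, and in particular all hyperedges of $H'$ have size at most $k$.

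I would then run the polynomial-time algorithm of Theorem~\ref{thm:mintransk} on the pair $(H,H')$ --- which applies precisely because every hyperedge of $H'$ has size at most $k$ --- and answer ``yes'' if and only if it reports $H'=H^*$. Correctness is immediate from the construction: if every minimal transversal of $H$ has size $k$, then $F$ is exactly the set of minimal transversals of $H$, so $H'=H^*$; conversely, if $H'=H^*$, then every minimal transversal of $H$ lies in $F$ and therefore has size $k$.

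I do not expect a genuine obstacle: the whole difficulty is outsourced to Theorem~\ref{thm:mintransk}, and the only points needing a line of care are the degenerate cases. If $H$ contains an empty hyperedge it has no transversal at all, so $F=\emptyset$ and $H^*$ has no edges, and the algorithm (vacuously, correctly) reports ``yes''; if $n<k$ while $H$ has no empty hyperedge, then $F=\emptyset$ but $H^*$ does have an edge, and the algorithm correctly reports ``no''. Both situations are covered by the same argument, so no separate treatment is really required.
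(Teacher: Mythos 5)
Your reduction is correct: constructing the hypergraph whose edges are all size-$k$ minimal transversals of $H$ by brute force over the $\binom{n}{k}$ candidate sets, and then testing equality with $H^*$ via the algorithm of Theorem~\ref{thm:mintransk} (applicable since all edges of the constructed hypergraph have size at most $k$), is precisely the derivation the paper has in mind, which it states without proof as a direct consequence of that theorem, citing \cite{gozupek:2017}. There are no gaps; the degenerate cases you mention are handled by the same argument, as you note.
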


The complexity of recognition of WTD graphs with bounded total domination number can now be derived from Corollary \ref{cor:wtdrec}.

\begin{theorem}\label{thm:recWTDk}
For every positive integer $k$, the problem of recognizing  WTD($k$) graphs can be solved in polynomial time.
\end{theorem}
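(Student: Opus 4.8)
The plan is to reduce the recognition of WTD($k$) graphs to the hypergraph question settled by Corollary~\ref{cor:wtdrec}. Given an input graph $G$ on $n$ vertices, I would first check in linear time whether $G$ has an isolated vertex; if so, $G$ has no total dominating set at all and hence is not WTD($k$), so we reject. Otherwise $V(G)$ is itself a total dominating set, so $MTDS(G)\neq\emptyset$, and $G$ is WTD($k$) exactly when every minimal total dominating set has size $k$ --- note this single condition already forces $\gamma_t(G)=\Gamma_t(G)=k$, so there is no need to compute $\gamma_t(G)$ separately. By Lemma~\ref{lem:hg}, $MTDS(G)$ is precisely the family of minimal transversals of the open-neighborhood hypergraph $H_G$, and $H_G$ (which has at most $n$ hyperedges, each a subset of an $n$-element vertex set) is computable from $G$ in polynomial time. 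Thus the whole problem becomes: decide whether all minimal transversals of $H_G$ have size $k$.

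To invoke Corollary~\ref{cor:wtdrec} I need a Sperner hypergraph, but $H_G$ may fail to be Sperner, since open neighborhoods can be nested (for instance in the presence of twins or of a vertex whose neighborhood is contained in another's). So the next step is to replace $H_G$ by the hypergraph $\widetilde H_G$ consisting of exactly one copy of each inclusion-minimal hyperedge of $H_G$, discarding all the others; this is obtained in $O(n^3)$ time by comparing the neighborhoods pairwise, and $\widetilde H_G$ is Sperner by construction. Since a set $T\subseteq V(G)$ meets every hyperedge of $H_G$ if and only if it meets every inclusion-minimal hyperedge of $H_G$, the hypergraphs $H_G$ and $\widetilde H_G$ have exactly the same transversals, hence exactly the same minimal transversals. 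This equivalence is the one place in the argument that has to be stated with care, although it is elementary.

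Finally, I would feed $\widetilde H_G$ together with the integer $k$ into the polynomial-time procedure of Corollary~\ref{cor:wtdrec}, which decides whether all minimal transversals of $\widetilde H_G$ have size $k$; by the two reductions above, this is precisely the question of whether $G$ is WTD($k$). The overall running time is polynomial because each reduction step is polynomial and, for fixed $k$, Corollary~\ref{cor:wtdrec} (which rests on Theorem~\ref{thm:mintransk}) runs in polynomial time. I do not expect any genuine obstacle beyond chaining these reductions correctly: the real difficulty --- recognizing a transversal hypergraph when one side has bounded edge size --- is exactly what Theorem~\ref{thm:mintransk} supplies, so the remaining work is just the bookkeeping of translating between a graph, its open-neighborhood hypergraph, and the associated Sperner hypergraph.
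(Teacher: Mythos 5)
Your proposal is correct and follows essentially the same route as the paper: reduce to the Sperner hypergraph formed by the inclusion-minimal open neighborhoods, identify its minimal transversals with the minimal total dominating sets via Lemma~\ref{lem:hg}, and apply Corollary~\ref{cor:wtdrec}. The paper builds the Sperner reduction directly into its definition of the hypergraph, whereas you spell out (correctly) why discarding non-minimal neighborhoods preserves the minimal transversals; this is the same argument with the bookkeeping made explicit.
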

\begin{proof}
Let $G$ be graph with no isolated vertices and $\gamma_{t}(G)=k$. 
Consider the hypergraph $H_{G}=(V, \mathcal{E})$, where $\mathcal{E}$ contains the inclusion-minimal elements of $\{N(v): v \in V\}$. 
Observe that $H_{G}$ is Sperner and that the minimal transversals of $H_{G}$ are exactly the minimal total dominating sets of $G$ by Lemma \ref{lem:hg}. 
It follows that $G$ is WTD if and only if all minimal transversals of $H_{G}$ are of size $k$. 
By Corollary \ref{cor:wtdrec}, this condition can be tested in polynomial time.
\end{proof}

\section{WTD Graphs with Total Domination Number Two} \label{sec:WTDtwo}
In this section, we study WTD graphs whose total domination number is 2.
We give complete characterizations of WTD(2) graphs with packing number 2 and triangle-free WTD(2) graphs. We also show that planar WTD(2) graphs with minimum degree at least 3 have at most 16 vertices.

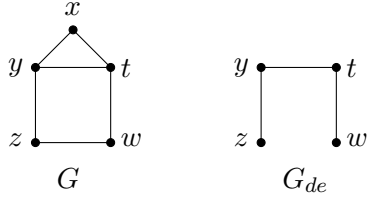
\begin{figure}\center
\begin{tikzpicture}[line cap=round,line join=round,>=triangle 45,x=1.0cm,y=1.0cm]
\clip(0.0,0.1) rectangle (6,3);
\draw  (1,1)-- (1,2);
\draw  (1,2)-- (2,2);
\draw  (2,1)-- (2,2);
\draw (1,1)-- (2,1);
\draw (1,2)-- (1.5,2.5);
\draw (2,2)-- (1.5,2.5);
\draw (1.15,0.8) node[anchor=north west] {$G$};
\begin{scriptsize}
\draw [fill=black] (1,1) circle (1.5pt);
\draw [fill=black] (1,2) circle (1.5pt);
\draw [fill=black] (2,1) circle (1.5pt);
\draw [fill=black] (2,2) circle (1.5pt);
\draw [fill=black] (1.5,2.5) circle (1.5pt);
\end{scriptsize}
\draw (1.25,3) node[anchor=north west] {$x$};
\draw (0.5,2.25) node[anchor=north west] {$y$};
\draw (0.5,1.25) node[anchor=north west] {$z$};
\draw (2,2.25) node[anchor=north west] {$t$};
\draw (2,1.25) node[anchor=north west] {$w$};

\begin{scope}[shift={(3,0)}]
\draw  (1,1)-- (1,2);
\draw (1,2)-- (2,2);
\draw (2,1)-- (2,2);
\draw (0.5,2.25) node[anchor=north west] {$y$};
\draw (0.5,1.25) node[anchor=north west] {$z$};
\draw (2,2.25) node[anchor=north west] {$t$};
\draw (2,1.25) node[anchor=north west] {$w$};
\draw (1.15,0.8) node[anchor=north west] {$G_{de}$};
\begin{scriptsize}
\draw [fill=black] (1,1) circle (1.5pt);
\draw [fill=black] (1,2) circle (1.5pt);
\draw [fill=black] (2,1) circle (1.5pt);
\draw [fill=black] (2,2) circle (1.5pt);
\end{scriptsize}
\end{scope}
\end{tikzpicture}
\caption{A WTD(2) graph $G$ and the graph $G_{de}$ obtained by the dominating edges of $G$.
}\label{fig:Gdeex}
\end{figure}

Let $G$ be a WTD(2) graph.
Note that every minimal TDS of $G$ is a pair consisting of endpoints of an edge of $G$. Consequently, every WTD(2) graph is connected.
We will call an edge of $G$ whose endpoints is a TDS of $G$ a \emph{dominating edge} of $G$.
Let $G_{de}$ be the graph with vertex set $\cup_{S \in MTDS(G)} S$ (i.e., vertices of $G$ serve as an endpoint of a dominating edge) and edge set which consists of dominating edges of $G$.
In other words,
$G_{de}$ is the edge-induced subgraph of $G$ obtained by the dominating edges.
See Figure \ref{fig:Gdeex} for an example.

\begin{remark}
Notice that the graph $G_{de}$ and the subgraph of $G$ induced by $V(G_{de})$ are not necessarily the same.
In general, $G_{de}$ is a subgraph of $G$ but not necessarily an induced subgraph of $G$ with respect to a set of vertices.
\end{remark}

A set $S$ is a \emph{vertex cover} of a graph $G$ if every edge of $G$ has an endpoint from $S$.
Let $MVC(G)$ denote the set of all minimal vertex covers of the graph $G$.

\begin{proposition}\label{prop:wtd2}
Let $G$ be a WTD(2) graph.
For every minimal vertex cover $S$ of $G_{de}$ there exists a vertex $v_S$ in $G$ such that $N(v_S)=S$.
\end{proposition}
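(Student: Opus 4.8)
The plan is to reduce the statement to Proposition~\ref{prop:transmtdsg} by identifying the minimal vertex covers of $G_{de}$ with the minimal transversals of the collection $MTDS(G)$.

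First I would record what $MTDS(G)$ looks like for a WTD(2) graph. Since $\gamma_t(G)=2$, no TDS has size $1$, so every $2$-element TDS of $G$ is automatically minimal; conversely, as $G$ is WTD with $\gamma_t=2$, every minimal TDS has size exactly $2$. Hence the minimal TDSs of $G$ are precisely the (unordered) pairs of endpoints of the dominating edges of $G$; in other words, viewing each element of $MTDS(G)$ as a $2$-element subset of $V(G)$, the collection $MTDS(G)$ is exactly the edge set of $G_{de}$.

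Next I would translate the notions of transversal and vertex cover. A set $T\subseteq V(G)$ is a transversal of $MTDS(G)$ precisely when it meets every dominating edge, i.e.\ when $T\cap V(G_{de})$ is a vertex cover of $G_{de}$. If moreover $T$ is a \emph{minimal} transversal, then $T$ cannot contain any vertex outside $\bigcup_{A\in MTDS(G)}A=V(G_{de})$ (such a vertex could be deleted while keeping $T$ a transversal), so $T\subseteq V(G_{de})$ and $T$ is a minimal vertex cover of $G_{de}$. Conversely, every minimal vertex cover $S$ of $G_{de}$ is a transversal of $MTDS(G)$, and it is minimal as a transversal because deleting any vertex of $S$ leaves some edge of $G_{de}$, equivalently some element of $MTDS(G)$, uncovered. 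Thus the minimal transversals of $MTDS(G)$ are exactly the minimal vertex covers of $G_{de}$.

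Finally, given a minimal vertex cover $S$ of $G_{de}$, by the above it is a minimal transversal of $MTDS(G)$, and Proposition~\ref{prop:transmtdsg} yields a vertex $v_S\in V(G)$ with $N(v_S)=S$, as required. I do not expect any genuine obstacle here; the only point that needs care is the minimality argument in the previous step — namely that a minimal transversal of $MTDS(G)$ cannot use vertices of $G$ that are not endpoints of any dominating edge — which is exactly what makes ``minimal transversal of $MTDS(G)$'' and ``minimal vertex cover of $G_{de}$'' coincide rather than merely correspond.
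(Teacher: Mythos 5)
Your proof is correct and follows essentially the same route as the paper: identify the minimal vertex covers of $G_{de}$ with the minimal transversals of $MTDS(G)$ (which the paper simply asserts, while you justify it carefully) and then invoke Proposition~\ref{prop:transmtdsg}. No issues.
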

\begin{proof}
Note that every minimal vertex cover $S$ of $G_{de}$ is a minimal transversal of $MTDS(G)$.
Therefore, by Proposition \ref{prop:transmtdsg} there exists a vertex in $G$ whose neighborhood is exactly $S$.
\end{proof}

\subsection{Characterization of WTD(2) Graphs with Packing Number 2}\label{sec:wtd2ro2}
A set $S\subseteq V(G)$ is called a packing of $G$ if $N[u]\cap N[v]=\emptyset$ for every distinct $u,v\in S$.
\emph{The packing number} $\rho(G)$ is the maximum size of a packing of $G$. It is well-known that for any graph $G$ we have $\rho(G) \leq \gamma(G) \leq \gamma_t(G)$.
Therefore, if $\gamma_t(G)=2$, then $\rho(G)$ is either 1 or 2.
In this subsection, we provide a characterization of WTD(2) graphs $G$ with $\rho(G)=2$.
In particular, this characterization allows us to construct any WTD(2) graph with $\rho(G)=2$.

Let $W_2$ be the set of graphs obtained as follows:\\
{\bf Step 1:} Choose a bipartite graph $H$ with no isolated vertices.\\
{\bf Step 2:} For every $S\in MVC(H)$, choose a new vertex $v_S$ and draw edges from $v_S$ to every vertex in $S$.\\
{\bf Step 3:} For each edge $uv$ in $H$ and every $w\in V(H)\backslash \{u,v\}$, add the edges $wu$ and/or $wv$ if needed to make sure $w$ is adjacent to at least one of $u$ and $v$..\\
{\bf Step 4:} 
Choose a new graph $H'$ (might be the empty graph) which is disjoint from the current graph. Then for each edge $uv$ in $H$ and every $w\in V(H')$, draw at least one of the edges $wu$ and $wv$.\\
A graph in $W_2$ is given in Figure \ref{fig:w2process}.
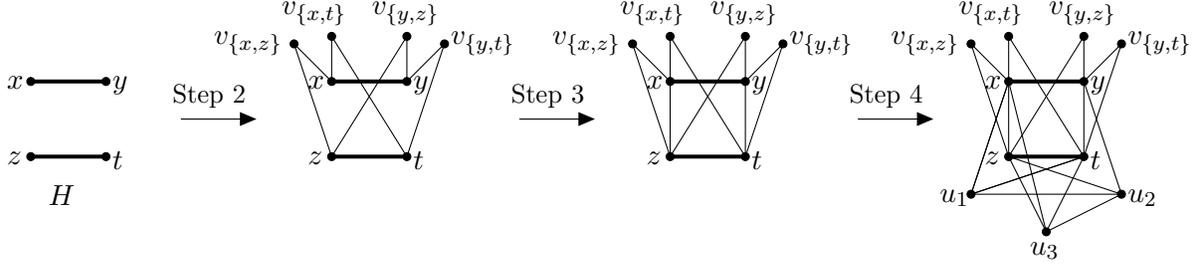
\begin{figure}\center
	\begin{tikzpicture}[line cap=round,line join=round,>=triangle 45,x=1.0cm,y=1.0cm]
	\clip(0.0,0.5) rectangle (16,5);
	\draw [line width=1.5pt] (0.5,3.)-- (1.5,3.);
	\draw [line width=1.5pt] (0.5,2.)-- (1.5,2.);
	\draw (0.6,1.75) node[anchor=north west] {$H$};
	\draw [->] (2.5,2.5) -- (3.5,2.5);
	\draw (2.25,3.1) node[anchor=north west] {{\small Step 2}};
	\draw [line width=1.5pt] (4.5,3.)-- (5.5,3.);
	\draw [line width=1.5pt] (4.5,2.)-- (5.5,2.);
	\draw (4.,3.5)-- (4.5,2.);
	\draw (4.,3.5)-- (4.5,3.);
	\draw (4.5,3.6)-- (4.5,3.);
	\draw (4.5,3.6)-- (5.5,2.);
	\draw (5.5,3.6)-- (4.5,2.);
	\draw (5.5,3.6)-- (5.5,3.);
	\draw (6.,3.5)-- (5.5,3.);
	\draw (6.,3.5)-- (5.5,2.);
	\draw (2.8,3.8) node[anchor=north west] {$v_{\{x,z\}}$};
	\draw (3.7,4.2) node[anchor=north west] {$v_{\{x,t\}}$};
	\draw (4.9,4.2) node[anchor=north west] {$v_{\{y,z\}}$};
	\draw (5.95,3.8) node[anchor=north west] {$v_{\{y,t\}}$};
	\draw (0.05,3.2) node[anchor=north west] {$x$};
	\draw (1.45,3.2) node[anchor=north west] {$y$};
	\draw (0.05,2.2) node[anchor=north west] {$z$};
	\draw (1.45,2.2) node[anchor=north west] {$t$};
	\draw (4.05,3.2) node[anchor=north west] {$x$};
	\draw (5.45,3.2) node[anchor=north west] {$y$};
	\draw (4.05,2.2) node[anchor=north west] {$z$};
	\draw (5.45,2.2) node[anchor=north west] {$t$};
	\begin{scriptsize}
	\draw [fill=black] (0.5,3.) circle (1.5pt);
	\draw [fill=black] (1.5,3.) circle (1.5pt);
	\draw [fill=black] (0.5,2.) circle (1.5pt);
	\draw [fill=black] (1.5,2.) circle (1.5pt);
	\draw [fill=black] (4.5,3.) circle (1.5pt);
	\draw [fill=black] (5.5,3.) circle (1.5pt);
	\draw [fill=black] (4.5,2.) circle (1.5pt);
	\draw [fill=black] (5.5,2.) circle (1.5pt);
	\draw [fill=black] (4.,3.5) circle (1.5pt);
	\draw [fill=black] (4.5,3.6) circle (1.5pt);
	\draw [fill=black] (5.5,3.6) circle (1.5pt);
	\draw [fill=black] (6.,3.5) circle (1.5pt);
	\end{scriptsize}
	
	\begin{scope}[shift={(4.5,0)}]
	\draw [->] (2.5,2.5) -- (3.5,2.5);
	\draw (2.25,3.1) node[anchor=north west] {{\small Step 3}};
	\draw [line width=1.5pt] (4.5,3.)-- (5.5,3.);
	\draw [line width=1.5pt] (4.5,2.)-- (5.5,2.);
	\draw (4.5,3)-- (4.5,2);
	\draw (5.5,3)-- (5.5,2);
	\draw (4.,3.5)-- (4.5,2.);
	\draw (4.,3.5)-- (4.5,3.);
	\draw (4.5,3.6)-- (4.5,3.);
	\draw (4.5,3.6)-- (5.5,2.);
	\draw (5.5,3.6)-- (4.5,2.);
	\draw (5.5,3.6)-- (5.5,3.);
	\draw (6.,3.5)-- (5.5,3.);
	\draw (6.,3.5)-- (5.5,2.);
	\draw (2.8,3.8) node[anchor=north west] {$v_{\{x,z\}}$};
	\draw (3.7,4.2) node[anchor=north west] {$v_{\{x,t\}}$};
	\draw (4.9,4.2) node[anchor=north west] {$v_{\{y,z\}}$};
	\draw (5.95,3.8) node[anchor=north west] {$v_{\{y,t\}}$};
	\draw (4.05,3.2) node[anchor=north west] {$x$};
	\draw (5.45,3.2) node[anchor=north west] {$y$};
	\draw (4.05,2.2) node[anchor=north west] {$z$};
	\draw (5.45,2.2) node[anchor=north west] {$t$};
	\begin{scriptsize}
	\draw [fill=black] (4.5,3.) circle (1.5pt);
	\draw [fill=black] (5.5,3.) circle (1.5pt);
	\draw [fill=black] (4.5,2.) circle (1.5pt);
	\draw [fill=black] (5.5,2.) circle (1.5pt);
	\draw [fill=black] (4.,3.5) circle (1.5pt);
	\draw [fill=black] (4.5,3.6) circle (1.5pt);
	\draw [fill=black] (5.5,3.6) circle (1.5pt);
	\draw [fill=black] (6.,3.5) circle (1.5pt);
	\end{scriptsize}
	\end{scope}
	
	\begin{scope}[shift={(9,0)}]
	\draw [->] (2.5,2.5) -- (3.5,2.5);
	\draw (2.25,3.1) node[anchor=north west] {{\small Step 4}};
	\draw [line width=1.5pt] (4.5,3.)-- (5.5,3.);
	\draw [line width=1.5pt] (4.5,2.)-- (5.5,2.);
	\draw (4.5,3)-- (4.5,2);
	\draw (5.5,3)-- (5.5,2);
	\draw (4.,3.5)-- (4.5,2.);
	\draw (4.,3.5)-- (4.5,3.);
	\draw (4.5,3.6)-- (4.5,3.);
	\draw (4.5,3.6)-- (5.5,2.);
	\draw (5.5,3.6)-- (4.5,2.);
	\draw (5.5,3.6)-- (5.5,3.);
	\draw (6.,3.5)-- (5.5,3.);
	\draw (6.,3.5)-- (5.5,2.);
	\draw (2.8,3.8) node[anchor=north west] {$v_{\{x,z\}}$};
	\draw (3.7,4.2) node[anchor=north west] {$v_{\{x,t\}}$};
	\draw (4.9,4.2) node[anchor=north west] {$v_{\{y,z\}}$};
	\draw (5.95,3.8) node[anchor=north west] {$v_{\{y,t\}}$};
	\draw (4.05,3.2) node[anchor=north west] {$x$};
	\draw (5.45,3.2) node[anchor=north west] {$y$};
	\draw (4.05,2.2) node[anchor=north west] {$z$};
	\draw (5.45,2.2) node[anchor=north west] {$t$};
	\begin{scriptsize}
	\draw [fill=black] (4.5,3.) circle (1.5pt);
	\draw [fill=black] (5.5,3.) circle (1.5pt);
	\draw [fill=black] (4.5,2.) circle (1.5pt);
	\draw [fill=black] (5.5,2.) circle (1.5pt);
	\draw [fill=black] (4.,3.5) circle (1.5pt);
	\draw [fill=black] (4.5,3.6) circle (1.5pt);
	\draw [fill=black] (5.5,3.6) circle (1.5pt);
	\draw [fill=black] (6.,3.5) circle (1.5pt);
	\draw [fill=black] (4.,1.5) circle (1.5pt);
	\draw [fill=black] (5.,1.) circle (1.5pt);
	\draw [fill=black] (6.,1.5) circle (1.5pt);
	\end{scriptsize}
	\draw (4,1.5)-- (4.5,3.);
	\draw (4,1.5)-- (5.5,2.);
	\draw (5,1)-- (4.5,3.);
	\draw (5,1)-- (4.5,2.);
	\draw (5,1)-- (5.5,2.);
	\draw (6,1.5)-- (4.5,2.);
	\draw (6,1.5)-- (5.5,3.);
	\draw (4,1.5)-- (4.5,3.);
	\draw (4,1.5)-- (5.5,2.);
	\draw (4,1.5)-- (6,1.5);
	\draw (6,1.5)-- (5,1);
	\draw (3.45,1.7) node[anchor=north west] {$u_1$};
	\draw (5.95,1.7) node[anchor=north west] {$u_2$};
	\draw (4.65,1.) node[anchor=north west] {$u_3$};
	\end{scope}
	\end{tikzpicture}
	\caption{A graph in $W_2$ obtained by the given process.
		Bold edges represent the dominating edges.
	}\label{fig:w2process}
\end{figure}

\begin{lemma}\label{thm:w2arewtd2}
If a graph $G$ is in $W_2$, then $G$ is a WTD(2) graph with $\rho(G)=2$.
\end{lemma}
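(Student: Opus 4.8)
The plan is to verify the two claims separately, working directly from the description of the construction. Fix $G\in W_2$; it is built from a bipartite graph $H$ with bipartition $(A,B)$, the vertices $v_S$ added in Step 2 for each $S\in MVC(H)$, and a disjoint graph $H'$ incorporated in Step 4. I would first exclude at the outset the degenerate case in which $H$ is edgeless (equivalently, empty), which the construction tacitly rules out, so that $H$ has at least one edge. The key structural observation, which I would record immediately, is that neither Step 3 nor Step 4 creates an edge incident to any $v_S$; hence $N(v_S)=S$ in $G$ for every $S\in MVC(H)$, and this fact drives the whole argument.

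Next I would show that for every edge $uv\in E(H)$ the set $\{u,v\}$ is a total dominating set of $G$, by checking the three types of vertices of $G$: the vertices $u$ and $v$ dominate each other; any other vertex of $H$ is adjacent to $u$ or to $v$ by Step 3; each $v_S$ is adjacent to $u$ or to $v$ because $S$ is a vertex cover of $H$ and therefore meets the edge $uv$; and each vertex of $H'$ is adjacent to $u$ or to $v$ by Step 4. Thus $\gamma_t(G)\le 2$, and since $G$ is a simple graph without isolated vertices we also have $\gamma_t(G)\ge 2$, so $\gamma_t(G)=2$.

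The main step is to show that every minimal TDS of $G$ has exactly two elements. Let $D$ be any TDS of $G$. Each $v_S$ needs a neighbor in $D$, so $D\cap S\ne\emptyset$ for every $S\in MVC(H)$; equivalently, $V(H)\setminus D$ contains no minimal vertex cover of $H$, hence is not a vertex cover of $H$ at all, so some edge $uv\in E(H)$ has both endpoints in $D$. Since $\{u,v\}$ is itself a TDS by the previous paragraph, minimality of $D$ forces $D=\{u,v\}$, so $|D|=2$. Together with $\gamma_t(G)=2$ this shows that $G$ is WTD(2). (In fact the argument also identifies $MTDS(G)$ with $\{\{u,v\}:uv\in E(H)\}$, but this is not needed.) I expect this step — the observation that the gadget vertices $v_S$ force any TDS to swallow a whole edge of $H$, which is exactly where indexing the new vertices by minimal vertex covers is used — to be the crux; everything else is routine verification.

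Finally, for $\rho(G)=2$: since $\rho(G)\le\gamma(G)\le\gamma_t(G)=2$, it remains to produce a packing of size $2$. Because $H$ has no isolated vertex, each of $A$ and $B$ is a minimal vertex cover of $H$ (deleting any single vertex of $A$ leaves the edge to one of its neighbors uncovered, and symmetrically for $B$), so $v_A$ and $v_B$ are vertices of $G$. They are distinct, they lie outside $V(H)$, $N(v_A)=A$, $N(v_B)=B$, and $A\cap B=\emptyset$; hence $N[v_A]\cap N[v_B]=\emptyset$, so $\{v_A,v_B\}$ is a packing and $\rho(G)=2$. A minor care point throughout is to make sure the three vertex classes of $G$ are treated exhaustively and that the neighborhoods $N(v_S)$ are not inadvertently enlarged in Steps 3 and 4.
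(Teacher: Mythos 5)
Your proof is correct and follows essentially the same route as the paper's: both arguments rest on the facts that each edge of $H$ totally dominates $G$ (so $\gamma_t(G)=2$), that the gadget vertices $v_S$ force any (minimal) TDS to contain both endpoints of some edge of $H$ via the vertex-cover correspondence, and that $v_A$, $v_B$ (the paper's $v_U$, $v_V$) give a packing of size $2$. The only difference is cosmetic: you argue directly that every TDS contains an edge of $H$, whereas the paper derives a contradiction from a hypothetical minimal TDS that is not an edge of $H$.
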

\begin{proof}
Let $G\in W_2$ and $H=(U,V,E)$ be the bipartite graph in the first step of the construction of $G$.
We first show that the packing number of $G$ is 2.
As $H$ has no isolated vertices, both $U$ and $V$ are minimal vertex covers of $H$.
Thus,
the vertices $v_U$ and $v_V$ have disjoint closed neighborhoods since $N(v_U)=U$ and $N(v_V)=V$ and hence, we get $\rho(G)\geq 2$. 
Clearly, by construction, every edge of $H$ is a dominating edge of $G$.
Therefore, we get $\gamma_t(G)=2$.
Since $\rho(G)\leq \gamma_t(G)$, we obtain $\rho (G) \leq 2$ and hence, $\rho(G)=2$.
	
Now let $T$ be a minimal TDS of $G$ other than the edges of $H$.
Then $T$ contains at most one endpoint of an edge of $H$ because otherwise $T$ contains a TDS, which contradicts with $T$ being minimal.
Therefore, $V(H)\backslash T$ is a vertex cover of $H$ and hence, it contains a minimal vertex cover $S$ of $H$.
By construction there exists a vertex $v_S$ with $N(v_S)=S$.
As $S\subseteq V(H)\backslash T$, we obtain $N(v_S)\cap T=\emptyset$, which contradicts with $T$ being a TDS of $G$.
Consequently, edges of $H$ are the only minimal TDSs of $G$ and hence,
$G$ is a WTD(2) graph and $G_{de}=H$.
\end{proof}

\begin{lemma}\label{lem:wtdimpw2}
Let $G$ be a WTD(2) graph with $\rho (G)=2$. Then, $G$ is in $W_2$.
\end{lemma}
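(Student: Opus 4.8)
The plan is to run the four-step construction of $W_2$ in reverse: take the bipartite graph of Step~1 to be $H:=G_{de}$ (this choice is forced, since the proof of Lemma~\ref{thm:w2arewtd2} shows that any graph built in $W_2$ from $H$ satisfies $G_{de}=H$), designate appropriate vertices of $G$ as the vertices $v_S$ of Step~2, let the remaining vertices form the graph $H'$ of Step~4, and then check that every edge of $G$ is permitted by one of the four steps. Two facts must be established for this to work: \emph{(i)} $G_{de}$ is bipartite and has no isolated vertex, so it is a legitimate input to Step~1; and \emph{(ii)} for every $S\in MVC(G_{de})$ there is a vertex $v_S\in V(G)\setminus V(G_{de})$ with $N(v_S)=S$, so that Step~2 can be carried out with genuinely new vertices.

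For \emph{(i)}, I would first record the elementary observation that if $ab$ is a dominating edge of $G$ then $\{a,b\}$ is a TDS, so \emph{every} vertex of $G$ is adjacent to $a$ or to $b$; hence for each $z\in V(G)$ the set $N(z)\cap V(G_{de})$ is a vertex cover of $G_{de}$. Since $\rho(G)=2$, fix a packing $\{p,q\}$ with $N[p]\cap N[q]=\emptyset$, and put $C_p:=N(p)\cap V(G_{de})$ and $C_q:=N(q)\cap V(G_{de})$. These are disjoint vertex covers of $G_{de}$. If some edge of $G_{de}$ had both endpoints in $C_p$, then $C_q$, which also covers that edge, would contain one of them, placing it in $C_p\cap C_q=\emptyset$; so every edge of $G_{de}$ has exactly one endpoint in $C_p$ and, being covered by $C_q$, its other endpoint in $C_q$. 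As $G_{de}$ has no isolated vertex, every vertex of $G_{de}$ lies on an edge and hence in $C_p\cup C_q$, so $G_{de}$ is bipartite with parts $C_p,C_q$.

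For \emph{(ii)} — the heart of the argument — fix $S\in MVC(G_{de})$. By Proposition~\ref{prop:wtd2} there is $v$ with $N(v)=S$, and I claim $v\notin V(G_{de})$. Otherwise $v$ lies in $C_p$ or $C_q$, say $v\in C_p$ (otherwise interchange $p$ and $q$). Then $v\in C_p\subseteq N(p)$, so $vp\in E(G)$, whence $p\in N(v)=S\subseteq V(G_{de})=C_p\cup C_q$. Since $p\notin N(p)$ we have $p\notin C_p$, so $p\in C_q\subseteq N(q)\subseteq N[q]$, contradicting $p\notin N[q]$. Hence for each $S$ we may pick $v_S\in V(G)\setminus V(G_{de})$ with $N(v_S)=S$, and distinct covers yield distinct vertices $v_S$. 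This step is exactly where the hypothesis $\rho(G)=2$ (and not merely ``$G$ is WTD(2)'') is used: for instance $C_4$ and $K_{1,3}$ are WTD(2) graphs that fail \emph{(ii)}, and both have packing number $1$.

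It then remains to verify, with $H:=G_{de}$, the chosen $v_S$, and $H':=G-V(H)-\{v_S:S\in MVC(H)\}$, that $G$ is exactly the graph produced by the construction. Each $v_S$ has $N(v_S)=S\subseteq V(H)$, so it is adjacent precisely to $S$ and to no other $v_{S'}$ and to no vertex of $H'$, matching Step~2. Within $V(H)$, $G$ contains $E(H)$ together with possibly further edges $xy$; for any such edge, $x$ is non-isolated in $H$ and $xy\notin E(H)$, so $x$ lies on an $H$-edge $xw$ with $w\neq y$, and Step~3 applied to $xw$ and $y$ permits adding $xy$, while the adjacency requirement of Step~3 holds because every dominating edge is a TDS. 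Finally, every vertex $r$ of $H'$ has $N(r)\cap V(H)$ a vertex cover of $H$ (by the observation in \emph{(i)}) and is non-adjacent to every $v_S$, so the edges between $H'$ and $V(H)$ are as allowed in Step~4 and the internal edges of $H'$ are arbitrary. Hence $G$ arises from the construction, i.e.\ $G\in W_2$. The only real obstacle is \emph{(ii)}; everything else is routine, if somewhat lengthy, edge-by-edge bookkeeping.
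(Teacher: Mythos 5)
Your proof is correct and follows essentially the same route as the paper: take $H=G_{de}$, use Proposition~\ref{prop:wtd2} to obtain the vertices $v_S$ for the minimal vertex covers of the bipartite graph $G_{de}$, show these vertices lie outside $V(G_{de})$, and reconstruct $G$ by the $W_2$ procedure. The only difference is local and cosmetic: the paper fixes a packing $\{x,y\}$ minimizing $|N[x]|+|N[y]|$ and first proves that the parts of $G_{de}$ are exactly $N(x)$ and $N(y)$ before excluding $v_S\in V(G_{de})$, whereas you work with an arbitrary packing and exclude it directly (via $p\in S\subseteq C_p\cup C_q$), a slight streamlining of the same argument.
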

\begin{proof}
Let $\{x,y\}$ be a packing with minimum $|N[x]|+ |N[y]|$.
Note that every dominating edge of $G$ has one endpoint from $N(x)$ and one from $N(y)$ and hence,
$G_{de}$ is a bipartite graph, say with parts $X$ and $Y$ where $X\subseteq N(x)$ and $Y\subseteq N(y)$.
	
We next show that $X=N(x)$ and $Y=N(y)$.
By symmetry, it suffices to prove $X=N(x)$.
Notice that $G_{de}$ has no isolated vertices and therefore,
$X$ is a minimal vertex cover of $G_{de}$.
By Proposition \ref{prop:wtd2} there exists a vertex $v_X$ satisfying $N(v_X)=X$.
Suppose that $X\neq N(x)$.
Then, we get $X \subset N(x)$.
Clearly $v_X \neq y$.
Moreover, $v_X \notin N(y)$ since $y\notin X=N(v_X)$.
Thus, we get $N[v_X] \cap N[y] = \emptyset$ and hence $\{v_X,y\}$ is a packing of $G$.
However, we obtain $|N[v_X]|+| N[y]|< |N[x]|+ |N[y]|$ since $X \subset N(x)$, which yields a contradiction with the definition of the packing $\{x,y\}$.
Consequently, we get $X=N(x)$ and hence, we may take $v_X=x$.
Similarly, we have $Y=N(y)$ and we may assume $v_Y=y$.
	
Now let $S$ be a minimal vertex cover of $G_{de}$.
By Proposition \ref{prop:wtd2} there exists a vertex $v_S$ satisfying $N(v_S)=S$.	If $S=X$ or $S=Y$, we can take $v_S$ to be $x$ or $y$, respectively, and in both cases, we have $v_S\notin V(G_{de})$.
Otherwise, suppose that $v_S \in V(G_{de})=X\cup Y$.
Without loss of generality, let $v_S \in X$.
Then, as $X=N(x)$, we get $x\in N(v_S)=S \subseteq N(x) \cup N(y)$, contradiction.
Therefore, $v_S$ is not a vertex of $G_{de}$, that is, $v_S \in V(G)\backslash V(G_{de})$.

Finally, we see that one can obtain the graph $G$ by following the procedure in the definition of $W_2$ with the initial bipartite graph $H=G_{de}$.
\end{proof}

Combining the results in Lemma \ref{thm:w2arewtd2} and Lemma \ref{lem:wtdimpw2} gives the following structural characterization of WTD(2) graphs with $\rho(G)=2$. 
Moreover, by definition of the class $W_2$, this provides us with a procedure to construct any WTD(2) graph  with $\rho(G)=2$.
\begin{theorem}\label{cor:w2wtd2}
A graph $G$ is WTD(2) with $\rho(G)=2$ if and only if $G\in W_2$.
\end{theorem}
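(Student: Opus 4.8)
The plan is to derive the statement immediately by combining the two lemmas established just above, which between them handle the two directions of the biconditional. There is essentially no new work to do at the level of the theorem itself: all the content has already been packaged into Lemma~\ref{thm:w2arewtd2} and Lemma~\ref{lem:wtdimpw2}, and the proof will consist of citing them.

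Concretely, I would argue as follows. For the ``if'' direction, assume $G \in W_2$; then Lemma~\ref{thm:w2arewtd2} says precisely that $G$ is a WTD(2) graph with $\rho(G) = 2$, so there is nothing more to check. For the ``only if'' direction, assume $G$ is WTD(2) with $\rho(G) = 2$; then Lemma~\ref{lem:wtdimpw2} says precisely that $G \in W_2$. Putting the two together gives the claimed equivalence. I would also point out that the ``only if'' direction is constructive: tracing through the proof of Lemma~\ref{lem:wtdimpw2}, the witness for $G \in W_2$ is obtained by running Steps 1--4 of the definition of $W_2$ with the initial bipartite graph $H := G_{de}$, so every WTD(2) graph of packing number two is literally rebuilt from its own dominating-edge subgraph. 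This is the sense in which the characterization yields a construction procedure.

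Since both halves are already in hand, I do not expect any obstacle in writing the proof of the theorem. If instead one had to prove the result from scratch, the two genuine difficulties are exactly the ones internal to the lemmas: in the ``if'' direction, verifying that Steps 2--4 never create a spurious minimal TDS, i.e.\ that the only minimal TDSs of the constructed graph are the edges of $H$ --- this is forced because the complement of any putative new minimal TDS is a vertex cover of $H$ and hence contains some $S \in MVC(H)$, whose associated vertex $v_S$ (added in Step 2) is then left undominated; and in the ``only if'' direction, the choice of a packing $\{x,y\}$ minimizing $|N[x]| + |N[y]|$, which pins down $N(x)$ and $N(y)$ as exactly the two sides $X,Y$ of the bipartite graph $G_{de}$ (via Proposition~\ref{prop:wtd2}) and lets every minimal vertex cover of $G_{de}$ be realized as the neighborhood of a vertex outside $V(G_{de})$. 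With Lemmas~\ref{thm:w2arewtd2} and~\ref{lem:wtdimpw2} assumed, the proof of Theorem~\ref{cor:w2wtd2} is just the one-line combination described above.
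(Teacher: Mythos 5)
Your proposal is correct and matches the paper exactly: the theorem is stated there as the immediate combination of Lemma~\ref{thm:w2arewtd2} (the ``if'' direction) and Lemma~\ref{lem:wtdimpw2} (the ``only if'' direction), with no additional argument needed. Your remarks about the constructive nature of the characterization also agree with the paper's comment that the class $W_2$ provides a procedure to build any WTD(2) graph with $\rho(G)=2$.
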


Given a graph $G$, the \textit{diameter} of $G$, denoted by $diam(G)$ is the maximum length of a shortest path between any pair of vertices of $G$.
Let $G$ be a graph such that $\gamma_t(G)=2$.
Then, it is easy to see that $diam(G) \leq 3$.
Moreover, whenever $\gamma_t(G)=2$, we have $diam(G)=3$ if and only if $\rho(G)=2$ and therefore,
in all the statements in Lemma \ref{thm:w2arewtd2}, Lemma \ref{lem:wtdimpw2} and Theorem \ref{cor:w2wtd2},
the condition $\rho(G)=2$ can be replaced with $diam(G)=3$.
\begin{corollary}
A graph $G$ is WTD(2) with $diam(G)=3$ if and only if $G\in W_2$.
\end{corollary}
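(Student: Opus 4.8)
The statement to be proved is that a graph $G$ is WTD(2) with $diam(G)=3$ if and only if $G\in W_2$. The plan is to reduce this entirely to Theorem~\ref{cor:w2wtd2}, which already gives the equivalence with the condition $\rho(G)=2$ in place of $diam(G)=3$. So the only thing that needs to be established is the auxiliary claim stated in the paragraph just before the corollary: \emph{for a graph $G$ with $\gamma_t(G)=2$, one has $diam(G)=3$ if and only if $\rho(G)=2$.} Once this biconditional is in hand, the corollary follows immediately by substituting equal conditions into Theorem~\ref{cor:w2wtd2} (note that membership in $W_2$ already forces $\gamma_t=2$, as shown in Lemma~\ref{thm:w2arewtd2}, so there is no circularity).

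First I would record the easy bounds. If $\gamma_t(G)=2$ then $G$ has a dominating edge $uv$, so every vertex lies in $N(u)\cup N(v)$; hence any two vertices are joined by a path of length at most $3$ through the edge $uv$, giving $diam(G)\le 3$. Also $G$ cannot be complete-bipartite-trivial in the sense of having diameter $1$, because $\gamma_t=2$ already rules out $diam(G)\le 1$ unless $G=K_2$, and one checks $K_2$ has $diam=1,\ \rho=1$ consistently; more to the point, $diam(G)\ge 2$ is automatic once $|V(G)|\ge 3$, and the interesting dichotomy is $diam(G)\in\{2,3\}$. Since we always have $\rho(G)\le\gamma_t(G)=2$, it remains to show that $\rho(G)=2$ is equivalent to $diam(G)=3$; equivalently, $\rho(G)=1$ iff $diam(G)\le 2$.

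For the direction $diam(G)\le 2 \Rightarrow \rho(G)=1$: if $\rho(G)\ge 2$, take distinct $u,v$ with $N[u]\cap N[v]=\emptyset$. Then $u$ and $v$ are non-adjacent and have no common neighbor, so every $u$--$v$ path has length at least $3$, whence $diam(G)\ge 3$, a contradiction. For the converse $diam(G)=3 \Rightarrow \rho(G)=2$: pick vertices $a,b$ with $d(a,b)=3$. Then $a\ne b$, $ab\notin E(G)$, and $a,b$ have no common neighbor (a common neighbor would give a path of length $2$). A common neighbor of a neighbor of $a$ and $a$ itself — more precisely, I claim $N[a]\cap N[b]=\emptyset$: any vertex $w\in N[a]\cap N[b]$ is at distance $\le 1$ from both $a$ and $b$, so $d(a,b)\le 2$, contradiction. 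Hence $\{a,b\}$ is a packing and $\rho(G)\ge 2$; combined with $\rho(G)\le 2$ this gives $\rho(G)=2$. This completes the auxiliary biconditional, and the corollary follows.

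I do not expect a serious obstacle here; the only point requiring a little care is making sure the argument uses $\gamma_t(G)=2$ exactly where needed (for the upper bound $diam(G)\le 3$) and that the substitution into Theorem~\ref{cor:w2wtd2} is legitimate, i.e.\ that both the hypothesis ``$G$ is WTD(2)'' and the conclusion ``$G\in W_2$'' already carry $\gamma_t(G)=2$ as part of their content, so replacing $\rho(G)=2$ by the provably equivalent $diam(G)=3$ changes nothing. The remark in the paper that this replacement is valid ``in all the statements'' of the preceding lemmas and theorem is exactly what is being formalized, so the corollary is really just the explicit record of that observation.
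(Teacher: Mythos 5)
Your proposal is correct and follows essentially the same route as the paper: the paper derives the corollary directly from Theorem~\ref{cor:w2wtd2} together with the observation, stated in the preceding paragraph, that for graphs with $\gamma_t(G)=2$ one has $diam(G)=3$ if and only if $\rho(G)=2$. Your write-up merely supplies the (easy) details of that equivalence, which the paper leaves implicit, so there is nothing substantively different here.
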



One may attempt to modify the description of $W_2$ graphs in order to describe all WTD(2) graphs with $\rho(G)=1$.
In the first step of the process of building a graph in $W_2$,
if one starts with a non-bipartite graph $H$ with no isolated vertices,
then the resulting graph is still WTD(2) but has packing number 1.
However, not every WTD(2) graph $G$ with $\rho(G)=1$ can be obtained in this way.
For example, consider the graph presented in Figure \ref{fig:Gdeex}.
To obtain this graph $G$, 
in Step 1 one should definitely choose $H$ to be the graph with vertex set $\{z,y,t,w\}$ and edge set $\{zy,yt,tw\}$ which is indeed $G_{de}$.
However, in Step 2 if one chooses a new vertex $v_S$ for $S=\{y,w\}$ (which  is a minimal vertex cover of $G_{de}$), then the graph $G$ can not be obtained. So, the complete characterization of WTD(2) graphs with  $\rho(G)=1$ is left as an open question.

\subsection{Triangle-free WTD(2) Graphs} \label{sec:bipWTD}
In this subsection, we provide characterization of triangle-free WTD(2) graphs.

\begin{lemma}\label{obs:bipgammat2}
If $G$ is a triangle-free graph with $\gamma_t(G)=2$,
then $G$ is a bipartite graph and we have
$$\rho(G)=
\begin{cases}
1, & \text{ if } $G$ \text{ is complete bipartite}\\
2, & \text{ otherwise}
\end{cases}
$$
\end{lemma}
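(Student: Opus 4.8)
The plan is to prove the two assertions separately. First, to see that $G$ is bipartite: since $\gamma_t(G)=2$, there is a dominating edge $uv$, meaning every vertex of $G$ is adjacent to $u$ or to $v$. Thus $V(G) = N(u) \cup N(v)$. Since $G$ is triangle-free, $N(u)$ is an independent set and $N(v)$ is an independent set. Hence $V(G)$ is covered by two independent sets, so $G$ is bipartite. (Note $u \in N(v)$ and $v \in N(u)$ since $uv$ is an edge, and any vertex in $N(u)\cap N(v)$ can be placed in either part; standard bipartition argument applies.)

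For the packing number, recall from the earlier discussion that $\gamma_t(G)=2$ forces $\rho(G)\in\{1,2\}$, and (as noted just after Theorem \ref{cor:w2wtd2}) $\rho(G)=2$ if and only if $\mathrm{diam}(G)=3$. So it suffices to show that a bipartite graph $G$ with $\gamma_t(G)=2$ has diameter $3$ exactly when it is not complete bipartite. If $G$ is complete bipartite with parts $A,B$, then any two vertices are at distance $1$ (same part is impossible since parts are independent — wait, same-part vertices are at distance $2$) or $2$, so $\mathrm{diam}(G)\le 2$, giving $\rho(G)=1$. Conversely, suppose $G$ is bipartite with bipartition $(A,B)$, $\gamma_t(G)=2$, and $G$ is not complete bipartite; I will exhibit two vertices at distance $3$. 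Since $G$ is not complete bipartite, there exist $a\in A$, $b\in B$ with $ab\notin E(G)$. I claim $\mathrm{dist}(a,b)=3$: it is odd (bipartite, different parts) and cannot be $1$, so it is at least $3$; and it is at most $3$ because $G$ is connected with $\gamma_t(G)=2$ so $\mathrm{diam}(G)\le 3$. Hence $\mathrm{diam}(G)=3$ and $\rho(G)=2$.

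The main point requiring a little care is the direct construction of a packing of size $2$ (if one prefers not to invoke the diameter equivalence): given non-adjacent $a\in A$, $b\in B$, one checks $N[a]\cap N[b]=\emptyset$. Indeed $N[a]\subseteq \{a\}\cup B$ and $N[b]\subseteq\{b\}\cup A$; a common vertex would lie in $(\{a\}\cup B)\cap(\{b\}\cup A)$, which is empty unless $a\in N[b]$ or $b\in N[a]$, i.e. unless $ab\in E(G)$ — contradiction. So $\{a,b\}$ is a packing and $\rho(G)\ge 2$, hence $\rho(G)=2$. I expect the only subtlety is being careful that $G$ being ``not complete bipartite'' is taken with respect to its (unique, since $G$ is connected) bipartition, and handling the degenerate observation that $\gamma_t(G)=2$ already rules out $G$ having an isolated vertex or being a single edge in a way that would make the bipartition ambiguous.
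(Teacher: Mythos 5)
Your proof is correct and follows essentially the same route as the paper: a dominating edge $uv$ yields the bipartition into the independent sets $N(u)$ and $N(v)$, and a non-adjacent cross pair gives a packing of size two (your direct check of $N[a]\cap N[b]=\emptyset$ just makes explicit what the paper dismisses as clear). The detour through the equivalence $\rho(G)=2 \Leftrightarrow \mathrm{diam}(G)=3$ is unnecessary but harmless, since your self-contained packing construction already settles that case.
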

\begin{proof}
Let $uv$ be a dominating edge of $G$.
Then we have $N(u)\cup N(v)=V(G)$.
As $G$ is triangle-free, none of two adjacent vertices have a common neighbor.
Therefore, we have $N(u)\cap N(v)=\emptyset$ and also see that both $N(u)$ and $N(v)$ are independent sets.
We consequently obtain that $G$ is a bipartite graph with parts $N(u)$ and $N(v)$.
Since $\rho(G)\leq \gamma_t(G)=2$, we have $\rho(G)\in \{1,2\}$.
Moreover, it is clear that $\rho(G)=1$ if and only if each vertex in $N(u)$ is adjacent to all the vertices in $N(v)$, i.e., $G$ is a complete bipartite graph.
\end{proof}

For a bipartite graph with parts $X$ and $Y$,
define $X_u=\{x\in X: N(x)=Y\}$ and $Y_u=\{y\in Y: N(y)=X\}$.
In other words,
$X_u$ (resp. $Y_u$) is the set of vertices in $X$ (resp. $Y$) which are adjacent to every vertex in $Y$ (resp. $X$).
The following result characterizes all triangle-free WTD(2) graphs. 

\begin{theorem}\label{thm:bipwtd2}
The following three statements are equivalent:\\
(i) $G$ is a triangle-free WTD(2) graph.\\
(ii) $G$ is a bipartite WTD(2) graph.\\
(iii) $G$ is complete bipartite graph or $G$ is a bipartite graph with parts $X$ and $Y$ such that there exist vertices $a\in X\backslash X_u$ and $b\in Y\backslash Y_u$ satisfying $N(a)=Y_u \neq \emptyset$ and $N(b)=X_u\neq \emptyset$.
\end{theorem}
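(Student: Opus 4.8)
The plan is to establish the cycle of implications (ii) $\Rightarrow$ (i), (i) $\Rightarrow$ (ii), (ii) $\Rightarrow$ (iii), (iii) $\Rightarrow$ (i). The first two are almost immediate: a bipartite graph is triangle-free, which gives (ii) $\Rightarrow$ (i); conversely, if $G$ is triangle-free and WTD(2) then $\gamma_t(G)=2$, so $G$ is bipartite by Lemma~\ref{obs:bipgammat2}, which gives (i) $\Rightarrow$ (ii). Hence the real content of the theorem is the equivalence of (ii) with the concrete structural description (iii).

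For (iii) $\Rightarrow$ (i): in both alternatives $G$ is bipartite, hence triangle-free, so it remains only to check that $G$ is WTD(2). If $G$ is complete bipartite with (nonempty) sides $A$ and $B$, a direct argument shows that every TDS meets both $A$ and $B$, that every pair $\{x,y\}$ with $x\in A$ and $y\in B$ is a (minimal) TDS, and that any TDS of size at least three properly contains such a pair; hence the minimal TDSs are exactly these pairs and $\gamma_t(G)=\Gamma_t(G)=2$. For the second alternative, I would first exhibit a dominating edge: picking any $x_0\in X_u$ and $y_0\in Y_u$ (both nonempty by hypothesis), $x_0$ is adjacent to all of $Y$, so $x_0y_0\in E(G)$, and $N(x_0)\cup N(y_0)=Y\cup X=V(G)$, whence $\gamma_t(G)=2$. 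Then, for any TDS $S$, the fact that $a$ must be dominated and $N(a)=Y_u$ gives $S\cap Y_u\neq\emptyset$, and symmetrically $S\cap X_u\neq\emptyset$; choosing $x'\in S\cap X_u$ and $y'\in S\cap Y_u$, the pair $\{x',y'\}$ is a dominating edge contained in $S$, so $S$ cannot be minimal unless $|S|=2$. Thus every minimal TDS of $G$ has size $2$.

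For (ii) $\Rightarrow$ (iii): since WTD(2) graphs are connected, $G$ has a unique bipartition $(X,Y)$. Fixing a dominating edge $uv$ with $u\in X$ and $v\in Y$, bipartiteness gives $N(u)\subseteq Y$ and $N(v)\subseteq X$, and $N(u)\cup N(v)=V(G)$ forces $N(u)=Y$ and $N(v)=X$; hence $u\in X_u$ and $v\in Y_u$, so $X_u$ and $Y_u$ are nonempty. If $G$ is complete bipartite, we are in the first alternative of (iii). Otherwise some $x\in X$ and $y\in Y$ are non-adjacent, so $X_u\subsetneq X$ and $Y_u\subsetneq Y$. The key structural observation is that an edge $xy$ (with $x\in X$, $y\in Y$) is a dominating edge if and only if $x\in X_u$ and $y\in Y_u$, and every such pair is indeed an edge; consequently $G_{de}$ is the complete bipartite graph with parts $X_u$ and $Y_u$, whose only minimal vertex covers are $X_u$ and $Y_u$. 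Applying Proposition~\ref{prop:wtd2} to these two minimal vertex covers of $G_{de}$ produces vertices $b$ with $N(b)=X_u$ and $a$ with $N(a)=Y_u$. Since $\emptyset\neq Y_u\subseteq Y$ and $G$ is bipartite, all neighbors of $a$ lie in $Y$, which forces $a\in X$, and $a\notin X_u$ because $N(a)=Y_u\neq Y$; symmetrically $b\in Y\setminus Y_u$ with $N(b)=X_u\neq\emptyset$. These $a$ and $b$ are exactly the vertices demanded by (iii).

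I expect the main obstacle to be the bookkeeping in (ii) $\Rightarrow$ (iii): correctly identifying $G_{de}$ with the complete bipartite graph on $X_u$ and $Y_u$, computing its minimal vertex covers (including the degenerate cases $|X_u|=1$ or $|Y_u|=1$), verifying the equivalences ``$G$ not complete bipartite'' $\iff X_u\subsetneq X \iff Y_u\subsetneq Y$, and checking that the vertices returned by Proposition~\ref{prop:wtd2} fall in the intended parts. Conceptually, however, the whole argument hinges on Proposition~\ref{prop:wtd2}: it is precisely the mechanism that forces the existence of the special vertices $a$ and $b$, and it is the only nontrivial ingredient.
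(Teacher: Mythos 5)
Your proposal is correct and follows essentially the same route as the paper: the core of (ii)$\Rightarrow$(iii) is identical (every dominating edge joins $X_u$ to $Y_u$, so $G_{de}$ is complete bipartite with minimal vertex covers exactly $X_u$ and $Y_u$, forcing vertices $a$, $b$ with $N(a)=Y_u$, $N(b)=X_u$), the only cosmetic difference being that you invoke Proposition~\ref{prop:wtd2} directly where the paper routes through the $W_2$ construction and Theorem~\ref{cor:w2wtd2}. Your explicit verification of (iii)$\Rightarrow$(i), which the paper leaves to the reader, is also correct.
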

\begin{proof}
By Lemma \ref{obs:bipgammat2} we see that (i) implies (ii).
On the other hand, the implication (iii)$\rightarrow$(i) can be easily verified and hence,
the proof finishes if we show that (ii) implies (iii).
Now let $G$ be a bipartite WTD(2) graph, say with parts $X$ and $Y$.
Clearly we will only consider the case when $G$ is not a complete bipartite graph.
By definition of $X_u$ and $Y_u$, note that every dominating edge of $G$ has one endpoint in $X_u\neq \emptyset$ and one endpoint in $Y_u\neq \emptyset$.
Moreover, any edge $xy$ where $x\in X_u$ and $y\in Y_u$ is a dominating edge of $G$.
Therefore, $G_{de}$ is the subgraph of $G$ induced by $X_u \cup Y_u$ and it is complete bipartite.
Thus, $G_{de}$ has only two minimal vertex covers, namely $X_u$ and $Y_u$.
Then, definition of a graph in $W_2$ and Theorem \ref{cor:w2wtd2} imply the existence of the vertices $a\in X\backslash X_u$ and $b\in Y\backslash Y_u$ with $N(a)=Y_u$ and $N(b)=X_u$.
\end{proof}


Although a polynomial time recognition algorithm for WTD(2) graphs follows from Theorem \ref{thm:recWTDk}, the characterization in Theorem \ref{thm:bipwtd2} provides us with a simple linear time recognition algorithm.

\begin{corollary}
Triangle-free WTD(2) graphs can be recognized in linear time.
\end{corollary}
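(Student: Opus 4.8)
The plan is to use the characterization from Theorem~\ref{thm:bipwtd2} directly. Given an input graph $G$ on $n$ vertices and $m$ edges, we must check in time $O(n+m)$ whether condition (iii) holds. First I would test whether $G$ is bipartite and connected; both take $O(n+m)$ via a single breadth-first search, and if $G$ is disconnected or non-bipartite we reject immediately (recall every WTD(2) graph is connected). This also produces a bipartition into parts $X$ and $Y$.

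Next I would compute the degree of every vertex, and set $a_X=|X|$, $a_Y=|Y|$. Then $X_u=\{x\in X:\deg(x)=a_Y\}$ and $Y_u=\{y\in Y:\deg(y)=a_X\}$ can each be read off in $O(n)$ time. The complete bipartite case is detected by checking whether $m=a_X\cdot a_Y$ (equivalently $X_u=X$ and $Y_u=Y$); if so, accept. Otherwise, I would verify the remaining requirements of (iii): that $X_u\neq\emptyset$ and $Y_u\neq\emptyset$, and that there exist $a\in X\setminus X_u$ with $N(a)=Y_u$ and $b\in Y\setminus Y_u$ with $N(b)=X_u$. To check the existence of such an $a$, it suffices to scan each vertex $x\in X\setminus X_u$ with $\deg(x)=|Y_u|$ and test whether its neighborhood equals $Y_u$; a single such $x$ suffices. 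Testing $N(x)=Y_u$ for one candidate costs $O(\deg(x))$ once $Y_u$ is stored as a boolean array, so the total over all candidates is $O(m)$; symmetrically for $b$. If such $a$ and $b$ both exist we accept, otherwise reject. Correctness is immediate from the equivalence (i)$\Leftrightarrow$(iii) in Theorem~\ref{thm:bipwtd2}, since (i) and ``triangle-free WTD(2)'' are the same property, and each individual test faithfully encodes the corresponding clause of (iii).

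One subtlety worth spelling out, which is the only place any care is needed, is that the input is not promised to be triangle-free or to have $\gamma_t=2$; the algorithm must simply decide membership in the class. This is handled correctly because condition (iii) is a self-contained structural description: if $G$ satisfies (iii) then by the theorem it is a triangle-free WTD(2) graph, and conversely. So we never need to test triangle-freeness or compute $\gamma_t$ explicitly. The main (minor) obstacle is purely bookkeeping: ensuring every neighborhood-equality test is amortized against the edge set so the total stays linear, which is achieved by the boolean-array representation of $X_u$ and $Y_u$ and by charging each comparison to a distinct incident edge.
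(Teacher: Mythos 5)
Your proof is correct and follows essentially the same route as the paper: test connectivity and bipartiteness by BFS, extract $X_u$ and $Y_u$ by degree counts, and verify condition (iii) of Theorem \ref{thm:bipwtd2} in linear time. The only cosmetic difference is that the paper checks for an isolated vertex in each part after deleting $X_u\cup Y_u$ (equivalent, since every vertex of $Y_u$ is adjacent to all of $X$, so $N(a)\subseteq Y_u$ forces $N(a)=Y_u$), whereas you test $N(a)=Y_u$ and $N(b)=X_u$ directly with boolean arrays.
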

\begin{proof}
Given a graph $G$, one can check whether it is a connected bipartite graph and if so, find its unique bipartition $(X,Y)$ in linear time (in the number of vertices and edges of $G$). Then, sets $X_u$ and $Y_u$ can be identified simply by assigning every vertex $x\in X$ such that $d(x)=|Y|$ into $X_u$, and $y\in Y$ such that $d(y)=|X|$ into $Y_u$. According to Theorem  \ref{thm:bipwtd2}, $G$ is triangle-free WTD(2) if and only if either $X_u=X$ and $Y_u=Y$ (thus, $G$ is complete bipartite), or the removal of $X_u$ and $Y_u$ leaves at least one isolated vertex in each one of $X$ and $Y$. Clearly, all these checks take only linear time. 
\end{proof}

\subsection{Planar WTD(2) Graphs} \label{sec:planarWTD}
In this subsection,
we study planar WTD(2) graphs whose minimum degree is at least three
and show that such graphs can have at most sixteen vertices.
Throughout this section,
we frequently use the fact that a graph obtained by an edge contraction of a planar graph is also planar.
Recall also that a planar graph contains no $K_5$ or $K_{3,3}$.

\begin{observation}
Let $G$ be a WTD(2) graph.
The vertex obtained by edge contraction of a dominating edge is a universal vertex in the new graph.
\end{observation}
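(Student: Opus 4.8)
The plan is to unpack the definitions directly. Let $G$ be a WTD(2) graph and let $uv$ be a dominating edge of $G$; this means $\{u,v\}$ is a total dominating set, so $N(u)\cup N(v)=V(G)$. Let $G'$ be the graph obtained by contracting the edge $uv$, and let $z$ be the new vertex replacing $u$ and $v$. I want to show $z$ is adjacent to every other vertex of $G'$, i.e. $z$ is universal in $G'$.

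First I would recall what edge contraction does: the vertex $z$ is adjacent in $G'$ precisely to those vertices that were adjacent (in $G$) to $u$ or to $v$ (loops and multiple edges being suppressed, since we work with simple graphs). So the neighborhood of $z$ in $G'$ is $(N_G(u)\cup N_G(v))\setminus\{u,v\}$. Since $\{u,v\}$ is a TDS, every vertex of $G$ lies in $N_G(u)\cup N_G(v)$; in particular every vertex $w\in V(G)\setminus\{u,v\}=V(G')\setminus\{z\}$ is adjacent to $u$ or to $v$, hence is adjacent to $z$ in $G'$. Therefore $z$ is adjacent to all other vertices of $G'$, which is exactly the assertion that $z$ is a universal vertex.

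There is essentially no obstacle here — the only point requiring a word of care is the bookkeeping of which vertices become neighbors of the contracted vertex and the observation that $u,v$ themselves are absorbed into $z$ and so need not be dominated. Since every WTD(2) graph is connected and $uv$ is a genuine edge of $G$, the contraction is well-defined and the statement follows immediately from $N_G(u)\cup N_G(v)=V(G)$.
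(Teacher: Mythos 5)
Your argument is correct and is exactly the intended one: the paper states this as an observation without proof, and the content is precisely that a dominating edge $uv$ satisfies $N_G(u)\cup N_G(v)=V(G)$, so the contracted vertex inherits adjacency to every remaining vertex. Nothing to add.
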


Let $\nu(G)$ denote the matching number of a graph $G$.

\begin{lemma}\label{lem:plwrd2vg8}
Let $G$ be a planar WTD(2) graph.
If $\nu(G_{de})\geq 3$, then $|V(G)|\leq 8$.
\end{lemma}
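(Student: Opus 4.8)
The plan is to exploit the Observation preceding this lemma, namely that contracting a dominating edge of a WTD(2) graph produces a universal vertex. Since $\nu(G_{de})\ge 3$, pick a matching $\{a_1b_1,a_2b_2,a_3b_3\}$ of size three in $G_{de}$; being a matching, these three edges are pairwise non-adjacent, so the six vertices $a_1,b_1,a_2,b_2,a_3,b_3$ are distinct and the three edges are vertex-disjoint. Let $G'$ be the graph obtained from $G$ by contracting each $a_ib_i$ to a new vertex $z_i$. Because the three contracted edges are vertex-disjoint, each contraction lowers the vertex count by exactly one, so $|V(G')|=|V(G)|-3$; and $G'$ is planar since edge contraction preserves planarity.

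The key claim is that $z_1,z_2,z_3$ are all universal vertices of $G'$. For a single contraction this is exactly the Observation. For the iterated contraction I would argue directly: each $a_ib_i$ is a dominating edge, so every vertex of $G$ — in particular each $a_j,b_j$ with $j\ne i$ — is adjacent to $a_i$ or to $b_i$. Hence, after the other two edges are contracted, the vertex replacing $\{a_j,b_j\}$ is still adjacent to $z_i$, and every surviving original vertex is still adjacent to $z_i$; so $z_i$ is adjacent to all other vertices of $G'$.

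Finally I would invoke the Kuratowski obstruction. In $G'$, the three universal vertices $z_1,z_2,z_3$ together with any three further vertices $u_1,u_2,u_3$ induce a subgraph containing $K_{3,3}$ with parts $\{z_1,z_2,z_3\}$ and $\{u_1,u_2,u_3\}$. Since a planar graph contains no $K_{3,3}$ subgraph, $G'$ can have at most two vertices besides $z_1,z_2,z_3$, i.e. $|V(G')|\le 5$, and therefore $|V(G)|\le 8$.

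I do not anticipate a genuine obstacle. The only points needing care are: (i) confirming that universality of each $z_i$ survives the other two contractions, which is handled by the fact that every vertex of $G$ is adjacent to an endpoint of any dominating edge; and (ii) using the matching hypothesis to guarantee the three contracted pairs are disjoint, so that the vertex count drops by exactly three and the $z_i$ are genuinely distinct.
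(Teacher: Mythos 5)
Your proof is correct and follows essentially the same route as the paper: contract the three vertex-disjoint dominating edges guaranteed by $\nu(G_{de})\geq 3$ and observe that the three resulting (universal) vertices together with any three remaining vertices yield a $K_{3,3}$, contradicting planarity. The only difference is cosmetic — you argue directly for the bound $|V(G)|\leq 8$ and spell out that universality survives the iterated contractions, whereas the paper argues by contradiction from $|V(G)|\geq 9$.
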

\begin{proof}
Suppose that $\nu(G_{de})\geq 3$ and $G$ has at least 9 vertices.
Then, $G$ has three independent dominating edges, say $u_1v_1,u_2v_2$ and $u_3v_3$, and three vertices other than $u_1,u_2,u_3,v_1,v_2,v_3$, say $w_1,w_2$ and $w_3$.
Now contract the edges $u_1v_1,u_2v_2$ and $u_3v_3$.
In the resulting graph, new three vertices and $w_1,w_2,w_3$ contain a $K_{3,3}$, which contradicts with the planarity.
\end{proof}

\begin{lemma}\label{lem:wtd2del3nu2}
If $G$ is a WTD(2) graph with $\delta(G)\geq 3$, then $\nu (G_{de})\geq 2$.
\end{lemma}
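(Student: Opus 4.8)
The plan is to argue by contradiction, using the classification of graphs with matching number one together with Proposition \ref{prop:wtd2}. Since $G$ is WTD(2) we have $\gamma_t(G)=2$, so $G$ has at least one dominating edge and hence $G_{de}$ has at least one edge, giving $\nu(G_{de})\geq 1$. Suppose for contradiction that $\nu(G_{de})=1$, i.e. $G_{de}$ contains no two independent edges. The first step is to invoke the elementary fact that a graph with at least one edge and no two disjoint edges is either a triangle or a star; since, by its definition as the edge-induced subgraph on the dominating edges, $G_{de}$ has no isolated vertex, it follows that $G_{de}$ is either a triangle $K_3$ or a star $K_{1,k}$ for some $k\geq 1$.

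The next step is to exhibit a small minimal vertex cover of $G_{de}$ in each case. If $G_{de}$ is the triangle on $\{a,b,c\}$, then $\{a,b\}$ is a minimal vertex cover of size $2$ (removing either vertex leaves an edge uncovered). If $G_{de}=K_{1,k}$ with center $c$, then $\{c\}$ is a minimal vertex cover of size $1$ (and for $k=1$ either endpoint works). In all cases $G_{de}$ has a minimal vertex cover $S$ with $|S|\leq 2$. Now apply Proposition \ref{prop:wtd2}: there is a vertex $v_S\in V(G)$ with $N(v_S)=S$, so $|N(v_S)|=|S|\leq 2<3\leq\delta(G)$, contradicting the minimum-degree hypothesis. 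Hence $\nu(G_{de})\geq 2$.

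The argument is short and I do not anticipate a serious obstacle; the points that require care are, first, confirming that $G_{de}$ genuinely has no isolated vertices — immediate from its definition — so that the matching-number-one classification yields exactly a triangle or a star and no other configuration slips through; second, verifying minimality of the chosen vertex covers so that Proposition \ref{prop:wtd2} is applicable; and third, not forgetting the degenerate subcase $k=1$ (a single dominating edge) and the fact that $\nu(G_{de})=0$ is ruled out at the outset because a WTD(2) graph always possesses a dominating edge.
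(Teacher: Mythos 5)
Your proof is correct, but it follows a genuinely different route from the paper's. You argue by contradiction: assuming $\nu(G_{de})=1$, you classify $G_{de}$ (which has no isolated vertices by construction) as a triangle or a star, exhibit a minimal vertex cover $S$ of size at most $2$, and then invoke Proposition \ref{prop:wtd2} to obtain a vertex $v_S$ with $N(v_S)=S$, whose degree is at most $2$, contradicting $\delta(G)\geq 3$; each of these steps checks out, including the minimality of the chosen covers and the degenerate single-edge case. The paper instead argues directly and without the transversal machinery: it takes one dominating edge $xy$, observes that $\delta(G)\geq 3$ forces every vertex to have a neighbor outside $\{x,y\}$, so $V(G)\setminus\{x,y\}$ is a TDS and therefore contains a minimal TDS, which by the WTD(2) property is a dominating edge $ab$ disjoint from $xy$, yielding two independent dominating edges at once. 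The paper's argument is more self-contained and constructive (it produces the matching explicitly and would generalize to deleting the endpoints of any dominating edge), while yours leverages the structural information in Proposition \ref{prop:wtd2} together with the standard classification of graphs with matching number one, at the cost of depending on that earlier proposition; both are short and valid.
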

\begin{proof}
Let $G$ be a WTD(2) graph with $\delta(G)\geq 3$.
It suffices to show that $G$ has two independent dominating edges.
Let $xy$ be a dominating edge of $G$.
Since the minimum degree is at least three, each vertex of $G$ has at least one neighbor in $V(G)\setminus \{x,y\}$.
Therefore, $V(G)\setminus \{x,y\}$ is a TDS of $G$ and hence,
it contains a dominating edge $ab$ since $G$ is WTD(2).
As the dominating edges $xy$ and $ab$ share no vertex, we get $\nu (G_{de})\geq 2$.
\end{proof}

Combining the results in Lemmas \ref{lem:plwrd2vg8} and  \ref{lem:wtd2del3nu2} gives the following result.
\begin{proposition}\label{prop:nu2n8}
If $G$ is a planar WTD(2) graph with $\delta(G)\geq 3$,
then $\nu(G_{de})=2$ or $|V(G)|\leq 8$.
\end{proposition}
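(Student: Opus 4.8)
The plan is to simply chain the two preceding lemmas, with a single case split. First I would apply Lemma~\ref{lem:wtd2del3nu2} to $G$: since $G$ is WTD(2) with $\delta(G)\geq 3$, this immediately gives $\nu(G_{de})\geq 2$. Hence $\nu(G_{de})$ is either exactly $2$ or at least $3$, and these two cases are exhaustive.

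In the first case, $\nu(G_{de})=2$, which is precisely the first alternative in the conclusion, so nothing further is needed. In the second case, $\nu(G_{de})\geq 3$; since $G$ is also planar, Lemma~\ref{lem:plwrd2vg8} applies and yields $|V(G)|\leq 8$, the second alternative. Combining the two cases gives the proposition.

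I do not expect any obstacle in this step itself: the genuine content lives entirely in the two lemmas it invokes, and here one only needs to check that the dichotomy ``$\nu(G_{de})=2$ versus $\nu(G_{de})\geq 3$'', conjoined with the bound $\nu(G_{de})\geq 2$ from Lemma~\ref{lem:wtd2del3nu2}, is exhaustive, which is immediate. The only mildly substantive of the two ingredients is Lemma~\ref{lem:plwrd2vg8}, whose proof contracts three independent dominating edges of the planar graph $G$ and, under the assumption $|V(G)|\geq 9$, locates three additional vertices that together with the three contracted (now universal, by the observation above) vertices induce a $K_{3,3}$, contradicting planarity; but that work has already been done, so the proof of the present proposition is just the assembly described above.
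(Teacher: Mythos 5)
Your argument is correct and is exactly the paper's: the proposition is stated there as an immediate consequence of combining Lemma~\ref{lem:wtd2del3nu2} (which gives $\nu(G_{de})\geq 2$) with Lemma~\ref{lem:plwrd2vg8} (which handles the case $\nu(G_{de})\geq 3$). Your explicit case split on $\nu(G_{de})=2$ versus $\nu(G_{de})\geq 3$ is precisely the assembly the paper intends.
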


We next study planar WTD(2) graphs
whose minimum degree is at least 3 and matching number is 2.

\begin{proposition}\label{prop:vGde=2}
If $G$ is a planar WTD(2) graph with $\delta(G) \geq 3$ and $\nu(G_{de})=2$,
then $|V(G)|\leq 16$.
\end{proposition}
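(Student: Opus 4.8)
The plan is to pin down the structure of $G_{de}$ first. Since $\nu(G_{de})=2$ but (by Lemma~\ref{lem:plwrd2vg8}) we may assume $|V(G)|\geq 9$, the graph $G_{de}$ has a maximum matching of exactly two edges; by the Gallai–Edmonds / König-type structure of a graph with small matching number, $G_{de}$ either has a vertex cover of size $2$, or is (essentially) a triangle plus pendant edges, or is a "double star"-type configuration. I would enumerate these few cases for $G_{de}$ explicitly. In each case, the vertex set $V(G_{de})$ is bounded by a small constant plus the number of leaves attached to the two cover vertices, and those leaves are the vertices whose only incident dominating edges go to one fixed cover vertex.

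The key lever is Proposition~\ref{prop:wtd2}: every minimal vertex cover $S$ of $G_{de}$ is realized as $N(v_S)$ for some vertex $v_S\in V(G)$. First I would use this together with $\delta(G)\geq 3$ and planarity to bound $V(G_{de})$ itself: if, say, $G_{de}$ is a star $K_{1,t}$ with center $c$, then $\{c\}$ and the leaf set are both minimal vertex covers, forcing a vertex adjacent to all $t$ leaves; combined with the leaves each having degree $\geq 3$ in $G$ and with planarity ($G$ has no $K_{3,3}$), this caps $t$. More generally I would contract the (at most two) dominating edges of a maximum matching of $G_{de}$: the Observation says each contracted vertex becomes universal, so two such contractions produce two universal vertices in a planar graph, and a planar graph on $n$ vertices with two universal vertices has $n\le$ a small constant unless the rest of the graph is very sparse — this is exactly where $K_5$/$K_{3,3}$-freeness bites. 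Iterating this bookkeeping over the handful of shapes of $G_{de}$ gives $|V(G_{de})|\le c_1$.

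Next I would bound the vertices \emph{outside} $V(G_{de})$. Every vertex $u\notin V(G_{de})$ has $N(u)$ equal to (or, if not minimal-cover-shaped, still contained in a way controlled by) a subset of $V(G_{de})$, because $u$ is incident to no dominating edge yet must be dominated and must dominate — so $N(u)\subseteq V(G)$ but, more usefully, the set of minimal vertex covers of $G_{de}$ is finite (at most a constant number, since $|V(G_{de})|$ is bounded), and Proposition~\ref{prop:wtd2} says each is some $N(v_S)$. The vertices not of this form are governed by Steps~3--4 of the $W_2$-type construction (or its $\rho=1$ analogue): they attach arbitrarily to $V(G_{de})$ subject only to covering each dominating edge. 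Here planarity again restricts how many such vertices can coexist — too many vertices each hitting a fixed pair $\{u_i,v_i\}$ of a dominating edge creates a $K_{3,3}$ after contraction. Carefully running this counting, and optimizing the constants, should land the total at $16$.

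The main obstacle I expect is the case analysis for the shape of $G_{de}$ combined with extracting the \emph{sharp} constant $16$ rather than merely \emph{some} constant: getting "bounded" is routine via two edge contractions plus $K_{3,3}$-freeness, but squeezing it to exactly sixteen requires tracking, in each configuration of $G_{de}$, precisely how many leaf-type vertices and how many external (Step 3/4) vertices can be simultaneously present without forcing a $K_5$ or $K_{3,3}$ minor, and checking that the worst case is attained. I would handle this by reducing to a small planar subconfiguration (the two universal-after-contraction vertices together with their common independent neighborhood) and invoking the fact that a bipartite planar graph on parts of sizes $a,b$ has at most $2(a+b)-4$ edges to kill the large cases, leaving only finitely many explicit graphs to inspect.
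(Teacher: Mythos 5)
Your plan breaks exactly where the paper has to work hardest: bounding the vertices that lie \emph{outside} the two independent dominating edges. Planarity alone does not do this, and your assertion that ``getting bounded is routine via two edge contractions plus $K_{3,3}$-freeness'' is not correct. After contracting the two dominating edges $ab$ and $xy$ one does get two universal vertices, and from this one can deduce (as the paper does) that $H=G-\{a,b,x,y\}$ is a disjoint union of paths; but the join of an edge with an arbitrarily long path is planar, so two universal vertices together with $K_5$/$K_{3,3}$-freeness do not cap $|V(G)|$. Likewise, your lever for the external vertices --- ``too many vertices each hitting a fixed pair $\{u_i,v_i\}$ of a dominating edge creates a $K_{3,3}$ after contraction'' --- fails as stated: a $K_{3,3}$ only appears when three vertices are adjacent to \emph{both} endpoints of the same dominating edge, whereas arbitrarily many vertices adjacent to just one endpoint of each of the two dominating edges (say all adjacent to $a$ and $x$) yield only a $K_{2,m}$-type configuration, which is planar; note also that the planar bipartite bound $2(a+b)-4$ you invoke is attained exactly by $K_{2,m}$, so it gives no contradiction there. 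Your claim that every vertex outside $V(G_{de})$ has neighborhood essentially contained in $V(G_{de})$ is also false (Step 4 of the $W_2$ construction attaches a whole graph $H'$ with its own internal edges), so the external vertices are not controlled the way you suggest.

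The missing ingredient is the WTD-specific forcing step, which is where Proposition \ref{prop:wtd2} must be combined with $\delta(G)\ge 3$ and $\nu(G_{de})=2$: if a vertex $u$ of $H$ sees only two of $a,b,x,y$, say $a$ and $x$, then $\{a,x\}$ cannot be a vertex cover of $G_{de}$ (a minimal cover inside it would equal $N(v)$ for some $v$, of degree at most $2$), so some dominating edge avoids $a$ and $x$; by $\nu(G_{de})=2$ it has one endpoint in $\{b,y\}$ and the other in $H$, and since it is dominating, its $H$-endpoint is adjacent to $u$. Hence every vertex of $H$, alone or together with a neighbor in $H$, sees at least three of $a,b,x,y$, and therefore both endpoints of one of the two dominating edges. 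Contracting each path component of $H$ into blocks of at most three vertices and applying pigeonhole plus the $K_{3,3}$ obstruction shows there are at most four such blocks, i.e.\ $\sum_i \lfloor (h_i+2)/3\rfloor\le 4$, giving $|V(H)|\le 12$ and $|V(G)|\le 16$. Your proposal never forces this three-fold attachment, and without it neither the constant $16$ nor even finiteness follows; the Gallai--Edmonds case analysis of $G_{de}$ that you put at the front is not needed for the paper's argument and does not substitute for the forcing step.
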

\begin{proof}
Let $ab$ and $xy$ be two independent dominating edges of $G$ and $H=G-\{a,b,x,y\}$.
Let $H_1,\dots,H_m$ be the connected components of $H$ and order of $H_i$ be $h_i$ for $i=1,\dots,m$.
Note that it suffices to show that $h_1+\cdots +h_m \leq 12$.

We first prove that each $H_i$ is a path or a singleton.
Note that it suffices to show that maximum degree of $H$ is at most 2 and $H$ contains no cycle.
Suppose that a vertex $v$ of $H$ has three neighbors, say $v_1,v_2,v_3$, in $H$.
Then contraction of the edges $ab$ and $xy$ gives rise to a $K_{3,3}$ with parts $\{ab, xy,v\}$ and $\{v_1,v_2,v_3\}$, contradiction.
Therefore, every vertex in $H$ has at most two neighbors in $H$.
Suppose that $H$ has a cycle, say $v_1,v_2,\dots, v_k$.
Contract the edge $v_kv_{k-1}$ and denote the new point by $v_{k-1}$.
Then contract the edge $v_{k-1}v_{k-2}$ and denote the new point by $v_{k-2}$ and so on.
Follow this process until we get a triangle $v_1,v_2,v_3$.
Then contracting the edges $ab$ and $xy$ yields a $K_5$ with vertices $ab,xy,v_1,v_2,v_3$, contradiction.
Thus, $H$ has no cycle and hence, $H$ is a disjoint union of paths and singletons.

We next show that for every vertex $u\in H$ we have $|N(u)\cap \{a,b,x,y\}|\geq 3$ or $|(N(u)\cup N(v))\cap \{a,b,x,y\}|\geq 3$ for some neighbor $v\in V(H)$ of $u$.
Since both $ab$ and $xy$ are dominating edges, the intersection $N(u)\cap \{a,b,x,y\}$ has at least two elements: one from $\{a,b\}$ and one from $\{x,y\}$.
Consider the case when $|N(u)\cap \{a,b,x,y\}|= 2$.
Without loss of generality, let $N(u)\cap \{a,b,x,y\}=\{a,x\}$.
Since the minimum degree of $G$ is at least 3, there is no vertex $v \in G$ such that $N(v)=\{a, x\}$. Hence, by Proposition \ref{prop:wtd2} the set $\{a,x\}$ is not a vertex cover of $G_{de}$.
Then, there exists an edge $wv$ of $G_{de}$ such that $\{w,v\} \cap \{a,x\} =\emptyset$.
Thus, as $\nu(G_{de})=2$ and $ab,xy \in G_{de}$, we have $wv=by$ or $w\in \{b,y\}$ and $v\in V(H)$.
Recall that $wv$ is a dominating edge in $G$ and hence,
$u$ is adjacent to $w$ or $v$.
Therefore, the case $wv=by$ is impossible and we see that $v$ is adjacent to $u$.
Consequently, we get $|(N(u)\cup N(v))\cap \{a,b,x,y\}|\geq 3$ since $w\in \{b,y\}$ is a neighbor of $v$.
Note that this result implies that if $u$ is a singleton, then it has at least three neighbors among $a,b,x,y$; otherwise, contraction of the edge $uv$ gives rise to a vertex adjacent to at least three of $a,b,x,y$.

We then apply the following process for each $i=1,\dots,m$:
If $h_i\leq 3$, contract the edges of $H_i$ and obtain a singleton.
If $h_i\geq 4$, let $H_i$ be the path $v_1,v_2,\dots,v_k$ where $k=h_i$.
First, contract $v_1v_2$ and $v_{k-1}v_k$.
Then contract the paths $v_3v_4v_5, v_6v_7v_8,\dots$ and so on.
Note that for every $i$ we obtain at least $2+\lfloor (h_i-4)/3\rfloor=\lfloor (h_i+2)/3\rfloor$ vertices adjacent to at least three of $a,b,x,y$.
Therefore, each such vertex is adjacent to both $a$ and $b$ or adjacent to both $x$ and $y$.
Assume that the number of vertices having at least three neighbors among $a,b,x,y$ in the resulting graph is more than 4.
Then, by pigeonhole principle, there will be three distinct vertices $u_1,u_2$ and $u_3$ each of which is adjacent, without loss of generality, to both $a$ and $b$. 
Then, contraction of the edge $xy$ gives a $K_{3,3}$ with parts $\{a,b,xy\}$ and $\{u_1,u_2,u_3\}$, contradicting with the planarity of $G$.
Thus, there are at most 4 vertices once the contraction process is terminated, that is, $\sum_{i=1}^m \lfloor (h_i+2)/3\rfloor \leq 4$.
Since $h_i$ is an integer, the inequality $h_i/3 \leq \lfloor (h_i+2)/3\rfloor$ holds, implying that $\sum_{i=1}^m h_i/3  \leq 4$ which yields $\sum_{i=1}^m h_i\leq 12$ as desired.
\end{proof} 

Propositions \ref{prop:nu2n8} and \ref{prop:vGde=2} imply that, unlike the general case stated in Corollary \ref{cor:infwtdk}, there is a finite number of planar WTD(2) graphs with $\delta(G)\geq 3$.

\begin{theorem}\label{thm:planar}
If $G$ is a planar WTD(2) graph with $\delta(G) \geq 3$, then $|V(G)|\leq 16$.
\end{theorem}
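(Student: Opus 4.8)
The plan is to reduce Theorem~\ref{thm:planar} directly to the two structural results already in hand, namely Proposition~\ref{prop:nu2n8} and Proposition~\ref{prop:vGde=2}, so the ``proof'' is essentially a case split with no new work. First I would invoke Lemma~\ref{lem:wtd2del3nu2} to record that $\delta(G)\geq 3$ forces $\nu(G_{de})\geq 2$, so the matching number of $G_{de}$ is at least $2$. Then I would split on whether $\nu(G_{de})=2$ or $\nu(G_{de})\geq 3$.

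In the case $\nu(G_{de})\geq 3$, Lemma~\ref{lem:plwrd2vg8} applies (the hypothesis of that lemma is exactly $\nu(G_{de})\geq 3$ together with planarity and WTD(2)), giving $|V(G)|\leq 8\leq 16$. In the case $\nu(G_{de})=2$, Proposition~\ref{prop:vGde=2} applies (its hypotheses are precisely planarity, $\delta(G)\geq 3$, and $\nu(G_{de})=2$), giving $|V(G)|\leq 16$. Since these two cases are exhaustive once $\nu(G_{de})\geq 2$ is known, we conclude $|V(G)|\leq 16$ in all cases. Equivalently, one can phrase this more compactly: Proposition~\ref{prop:nu2n8} already packages the dichotomy as ``$\nu(G_{de})=2$ or $|V(G)|\leq 8$'', so Theorem~\ref{thm:planar} follows by combining it with Proposition~\ref{prop:vGde=2} to handle the former alternative.

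There is essentially no obstacle here, since all the combinatorial and topological content (the $K_{3,3}$ and $K_5$ arguments via edge contractions, the path/singleton structure of $H=G-\{a,b,x,y\}$, the pigeonhole counting of vertices with three neighbours among $\{a,b,x,y\}$) has already been discharged in the lemmas and propositions of this subsection. The only thing to be careful about is making sure the two cases of the split match the hypotheses of the cited results exactly: $\nu(G_{de})\geq 3$ triggers Lemma~\ref{lem:plwrd2vg8}, while $\nu(G_{de})=2$ triggers Proposition~\ref{prop:vGde=2}, and Lemma~\ref{lem:wtd2del3nu2} guarantees no smaller value of $\nu(G_{de})$ can occur. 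So the write-up is a one-paragraph deduction, and I would flag as the (very minor) ``hard part'' only the bookkeeping of confirming that Proposition~\ref{prop:nu2n8} indeed already absorbs Lemma~\ref{lem:wtd2del3nu2} internally, so that the final proof need only cite Propositions~\ref{prop:nu2n8} and~\ref{prop:vGde=2}.
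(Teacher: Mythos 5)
Your proof is correct and follows essentially the same route as the paper, which derives Theorem~\ref{thm:planar} by combining Proposition~\ref{prop:nu2n8} (the dichotomy $\nu(G_{de})=2$ or $|V(G)|\leq 8$, itself obtained from Lemmas~\ref{lem:plwrd2vg8} and~\ref{lem:wtd2del3nu2}) with Proposition~\ref{prop:vGde=2}. Your case split on $\nu(G_{de})$ is just an unpacked version of that same deduction, so nothing further is needed.
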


In contrast, there is no upper bound on the number of vertices for planar WTD(2) graphs with minimum degree 1 or 2.
For example, consider a star with arbitrarily many leaves and a graph with arbitrarily many triangles sharing a common edge, respectively.

\section{Girth of WTD Graphs} \label{sec:girthWTD}
In this section, we provide a relation between the minimum degree and the girth for WTD graphs.
We show that if the minimum degree is more than two in a WTD graph, then the graph contains a cycle of length at most twelve.
It is shown in \cite{HaRa97} that if $G$ is a WTD graph with $\delta(G)\geq 2$,
then the girth of $G$, $g(G)$, is at most 14.
\begin{theorem}[Theorem 4.1 in \cite{HaRa97}]
Suppose $G$ is a connected graph with no leaves such that $G$ has girth at least fifteen.
Then $\gamma_t(G)<\Gamma_t(G)$.
\end{theorem}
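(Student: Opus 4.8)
The plan is to construct two minimal total dominating sets of $G$ whose sizes differ by one. Since every minimal TDS has cardinality between $\gamma_t(G)$ and $\Gamma_t(G)$, the existence of two of different sizes immediately forces $\gamma_t(G)<\Gamma_t(G)$, that is, $G$ is not WTD.

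First I would isolate a long ``clean'' substructure. A finite connected graph with no leaves is not a forest, hence contains a cycle; a shortest cycle $C$ has length $g=g(G)\ge 15$ and is chordless. The girth hypothesis then yields a strong locality statement: any vertex outside $C$ adjacent to two vertices of $C$ would, together with the shorter arc of $C$ joining them, close a cycle of length at most $\lfloor g/2\rfloor+2<g$; hence \emph{no vertex of $G$ outside $C$ has more than one neighbour on $C$}, and by the same short-cycle bookkeeping there is no short cycle through any short arc of $C$. So whatever we do to a bounded stretch of $C$ is shielded from the rest of $G$.

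Next I would exhibit the two sets. On $C$ I use the standard periodic total-domination pattern: consecutive pairs $\{x_i,x_{i+1}\}$ separated by gaps of two, which totally dominates every vertex of $C$ and in which each pattern vertex is forced (the ``gap'' vertex on one side is dominated only by it, or it is the unique $C$-dominator of its block-mate). Call such a set $A_0$. Now obtain $B_0$ from $A_0$ by replacing one pair block $\{x_i,x_{i+1}\}$ by a triple block $\{x_{i-1},x_i,x_{i+1}\}$ and re-aligning the two adjacent gaps; a short check shows the triple is still irreducible (its middle vertex is the unique dominator of the two others, and the outer two keep their ``gap'' private neighbours), and $B_0$ totally dominates the same portion of $C$ using exactly one extra vertex. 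Finally I would extend each of $A_0,B_0$ by the \emph{same} greedy rule to a total dominating set of all of $G$ — for each still-undominated vertex add one of its neighbours, always chosen (using $\delta(G)\ge 2$ and the locality above to guarantee a legal choice exists) so as not to lie on $C$ or be adjacent to $C$ — and then prune each to a minimal TDS while protecting the pattern vertices, every one of which retains a certificate of necessity inside $C$. This produces minimal total dominating sets $A\supseteq A_0$ and $B\supseteq B_0$ that agree outside a bounded stretch of $C$ and satisfy $|B|=|A|+1$, whence $\gamma_t(G)<\Gamma_t(G)$.

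The main obstacle is precisely this last step: arranging the greedy extension and the pruning so that the triple block cannot collapse back to a pair, and so that no vertex \emph{anywhere} in $G$ — in particular none of the vertices added during the extension — becomes deletable when we pass from $A$ to $B$. Carrying out this private-neighbour bookkeeping, together with the modular case analysis according to $g \pmod 4$ (which controls the ``defect'' of the periodic pattern on $C$), is where the bound $g\ge 15$ is consumed: one needs several consecutive vertices of $C$ as a buffer on each side of the altered block so that the rest of $G$ is provably too far to interfere. The next section revisits exactly this buffer count and, using the extra hypothesis $\delta(G)\ge 3$, sharpens the threshold from $15$ to $12$.
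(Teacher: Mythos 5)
Your construction gets the easy half right (the girth forces every vertex outside a shortest cycle $C$ to have at most one neighbour on $C$, and the periodic pair pattern with its re-aligned ``triple block'' variant does give two irredundant partial patterns on $C$ differing by one vertex), but the step you yourself flag as ``the main obstacle'' is not an obstacle to be postponed -- it is the entire content of the theorem, and as sketched it does not go through. The two patterns $A_0$ and $B_0$ use \emph{different} vertices of $C$ along the re-aligned stretch, so the off-cycle vertices attached to that stretch (each attached at exactly one $C$-vertex) are dominated by one pattern and not the other; consequently your ``same greedy rule'' faces different sets of still-undominated vertices in the two cases, and there is no argument that it adds the same number of vertices, nor that the subsequent pruning to minimality removes the same number. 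Protecting the pattern vertices via private neighbours on $C$ is fine, but exact cardinality control of two minimal total dominating sets of the \emph{whole} graph -- which is what $|B|=|A|+1$ demands -- is never established, and it cannot be waved through by ``agreeing outside a bounded stretch of $C$'': the subgraphs hanging off the modified stretch may require different numbers of dominators under the two alignments, and an added vertex that is redundant in one extension need not be redundant in the other. In addition, your one-at-a-time greedy extension does not even obviously terminate in a \emph{total} dominating set with controlled count, since each added vertex must itself be dominated, triggering further additions.

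This is also a genuinely different route from the one the paper follows (and from Hartnell and Rall's original argument, which the paper adapts in Theorem~\ref{thm:wtdmindeg3}). There one argues by contradiction and never tries to build two minimal TDSs of $G$ directly: assuming $G$ is WTD, one chooses a set $A$ of edges inducing a disjoint union of $K_2$'s, built from distance levels around a short path (or around a segment of a shortest cycle), and invokes Lemma~\ref{lem:wtdreduction} to conclude that $G-N[A]$ is again WTD; the girth hypothesis is then used only to guarantee that $G-N[A]$ has no isolated vertices and contains a path such as $P_5$ as a whole component, which is not WTD -- a contradiction. That hereditary reduction is precisely what lets one avoid the global bookkeeping your plan runs into: all size comparisons are delegated to a five-vertex component. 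To salvage your approach you would have to prove a statement of comparable strength (e.g.\ that your two patterns on $C$ extend to minimal TDSs whose symmetric difference is confined to $C$), and nothing in the proposal currently does that.
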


By following the idea in the proof of Theorem 4.1 in \cite{HaRa97},
one can find other relations between $\delta(G)$ and $g(G)$ of a WTD graph $G$.
Before presenting such extensions, we need the following useful lemma, which is also given in \cite{HaRa97}:
\begin{lemma}\label{lem:wtdreduction}
Let $G$ be a WTD graph, $u_1v_1,\dots, u_mv_m$ be a subset of the edges of $G$ and $A=\cup_{i=1}^m \{u_i,v_i\}$.
If the subgraph of $G$ induced by $A$ is disjoint union of $m$ $K_2$s and $G-N[A]$ has no isolated vertices,
then $G-N[A]$ is also WTD.
\end{lemma}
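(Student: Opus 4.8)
Write $G' := G - N[A]$; since $G'$ has no isolated vertices by hypothesis it admits minimal total dominating sets, so asking whether $G'$ is WTD makes sense. The plan is to reduce everything to one claim: \emph{if $S'$ is a minimal TDS of $G'$, then $S := S' \cup A$ is a minimal TDS of $G$.} Granting this, $S'$ and $A$ are disjoint (one lies in $V(G)\setminus N[A]$, the other in $N[A]$), so $|S| = |S'| + 2m$; as $G$ is WTD, $|S| = \gamma_t(G)$, whence $|S'| = \gamma_t(G) - 2m$ for \emph{every} minimal TDS $S'$ of $G'$. Hence all minimal TDSs of $G'$ have the same size, i.e. $G'$ is WTD, and we are done.

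To prove the claim I would record two easy facts. (i) Every $w \in V(G') = V(G)\setminus N[A]$ has no neighbour in $A$ (else $w\in N[A]$), and since $G'$ is an \emph{induced} subgraph of $G$, every neighbour of such a $w$ lying in $V(G')$ is already a neighbour in $G'$. (ii) $S = S'\cup A$ is contained in $V(G')\cup A$. That $S$ is a TDS of $G$ then follows by running through the partition $V(G) = A \sqcup (N[A]\setminus A) \sqcup V(G')$: each $u_i$ is dominated by its partner $v_i \in A \subseteq S$ and vice versa; each vertex of $N[A]\setminus A$ has a neighbour in $A \subseteq S$ by definition of $N[A]$; and each vertex of $V(G')$ has a neighbour in $S' \subseteq S$ because $S'$ totally dominates $G'$ and $G'$ is induced.

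For minimality of $S$, since $S = S'\sqcup A$, I would treat two cases. Removing some $u_i$ (symmetrically $v_i$): here the hypothesis that the subgraph induced by $A$ is \emph{exactly} a disjoint union of $m$ $K_2$'s gives that the only neighbour of $v_i$ inside $A$ is $u_i$, while all remaining neighbours of $v_i$ lie in $N[A]\setminus A$, which is disjoint from $S$ by (ii); so $S\setminus\{u_i\}$ leaves $v_i$ undominated. Removing some $s\in S'$: minimality of $S'$ in $G'$ yields $w\in V(G')$ with no $G'$-neighbour in $S'\setminus\{s\}$; by (i) this $w$ also has no neighbour in $A$ and gains no new neighbours in $G$, so it has no neighbour in $(S'\setminus\{s\})\cup A = S\setminus\{s\}$, and $S\setminus\{s\}$ is not a TDS. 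This proves the claim.

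I do not expect a real obstacle here --- the whole argument is bookkeeping about which neighbours fall inside or outside $N[A]$. The two places needing care are (a) using the full hypothesis that $A$ induces \emph{precisely} $m$ disjoint $K_2$'s: a chord among the $u_i,v_j$ would let $v_i$ survive the deletion of $u_i$ and destroy minimality of $S$; and (b) noting that the ``$G-N[A]$ has no isolated vertices'' hypothesis is exactly what keeps the conclusion from being vacuous, since it guarantees that $G'$ has minimal TDSs to compare.
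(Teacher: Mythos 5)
Your proof is correct and follows essentially the same route as the paper: show that $S'\cup A$ is a minimal TDS of $G$ for every minimal TDS $S'$ of $G-N[A]$, then use the WTD property of $G$ to conclude that all such $S'$ have size $\gamma_t(G)-2m$. The only cosmetic difference is that you certify minimality via the single-vertex-deletion criterion (valid since supersets of TDSs are TDSs), whereas the paper argues directly that any TDS contained in $S'\cup A$ must equal $S'\cup A$; you also write out the domination check that the paper leaves as ``easy to see.''
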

\begin{proof}
Let $S$ be a minimal TDS of $G-N[A]$.
We claim that $S\cup A$ is a minimal TDS of $G$.
It is easy to see that it is a TDS of $G$.
Suppose that $S\cup A$ contains another TDS of $G$, say $T$.
Then $T\cap S$ is a TDS of $G-N[A]$ and hence, since $S$ is minimal we get $T\cap S=S$.
Therefore, we obtain $T=S\cup A'$ where $A'\subseteq A$.
If $A\backslash A'$ is nonempty, then without loss of generality we assume that $u_1 \in A\backslash A'$.
But then, $v_1$ is not dominated by $T$, contradiction.
Therefore, we have $A'=A$, which implies that $T=S\cup A$, that is, $S\cup A$ is minimal.

As every minimal TDS of $G$ has the same size, $|S|+m$ is independent of $S$ and hence,
$G-N[A]$ is a WTD graph as well.
\end{proof}

\begin{theorem}\label{thm:wtdmindeg3}
If $G$ is a WTD graph with $\delta (G)\geq 3$,
then $g(G)\leq 12$.
\end{theorem}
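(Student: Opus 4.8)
\emph{Overview and the case $\gamma_t(G)=2$.} Suppose for contradiction that $G$ is WTD with $\delta(G)\ge 3$ and $g(G)\ge 13$; following the idea of Theorem~4.1 of \cite{HaRa97}, the plan is to produce two minimal total dominating sets of $G$ of different sizes. First, if $\gamma_t(G)=2$ then $G$ is a WTD(2) graph, and Section~\ref{sec:WTDtwo} already forces a small girth: either $G$ has a triangle, or $G$ is triangle-free and by Theorem~\ref{thm:bipwtd2} is complete bipartite or contains the complete bipartite subgraph induced by $X_u\cup Y_u$, where $|X_u|,|Y_u|\ge 2$ because $\delta(G)\ge 3$, so $G$ contains a $C_4$. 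In all cases $g(G)\le 4$, a contradiction; hence from now on we may assume $\gamma_t(G)\ge 3$.

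\emph{Local structure and a reduction.} Fix a shortest cycle $C=v_1v_2\cdots v_gv_1$ with $g\ge 13$. Then $C$ is chordless, each $v_i$ has a neighbor off $C$ (as $\deg v_i\ge 3$), and a short girth computation shows that for every edge $uv$ of $G$ no vertex outside $N[\{u,v\}]$ has two neighbors inside $N[\{u,v\}]$ (otherwise one closes a cycle of length at most $5$); in particular two cycle vertices share a neighbor off $C$ only if they lie far apart on $C$. Consequently, deleting $N[e]$ for an edge $e$ removes at most one neighbor from each surviving vertex, so $G-N[A]$ has no isolated vertices whenever $A$ is the vertex set of one or two edges of $C$ inducing a disjoint union of $K_2$'s. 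By Lemma~\ref{lem:wtdreduction}, such a $G-N[A]$ is WTD, every minimal TDS of it enlarged by $A$ is a minimal TDS of $G$, and $\gamma_t(G-N[A])=\gamma_t(G)-|A|/2$. This step is where $\delta(G)\ge 3$ — rather than only $\delta(G)\ge 2$ — is used, and it is what improves the bound $14$ of \cite{HaRa97} to $12$.

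\emph{Exhibiting two minimal TDSs.} Choose $A$ to be one or two edges of $C$ so that $N[A]$ deletes only a bounded stretch of $C$; the WTD graph $G':=G-N[A]$ then still contains a chordless arc of $C$ whose length grows with $g$, together with the off-$C$ neighbors of its vertices, which by the girth bound are essentially private to single, far-apart cycle vertices. On this arc we build two minimal total dominating sets of $G'$: a ``dense'' one using the periodic pattern $\{v_{4k+1},v_{4k+2}\}$ and a ``wasteful'' one using $\{v_{3k+1},v_{3k+2}\}$, each completed to the rest of $G'$ by the vertices supplied by Proposition~\ref{prop:transmtdsg}; since $g\ge 13$ the arc is long enough for the two densities to give sets of different sizes, contradicting that $G'$ — and hence $G$ — is WTD.

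\emph{Main obstacle.} The delicate part is the last step: one must verify that the wasteful pattern, after being patched together with a total domination of the remainder of $G'$, is still \emph{minimal} — no cycle vertex it uses may become removable because some off-$C$ neighbor or some far-away vertex secretly dominates it — and one must do the counting sharply enough to exclude girth $13$ and $14$, not merely girth $\ge 15$. Determining exactly how short the retained arc may be while still carrying the two competing minimal patterns, given the control over off-$C$ adjacencies afforded by $g(G)\ge 13$ and $\delta(G)\ge 3$, is the technical heart of the argument.
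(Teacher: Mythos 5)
There is a genuine gap, and it sits exactly where you flag it yourself: the step ``Exhibiting two minimal TDSs'' is not an argument but a wish. Nothing in the proposal verifies that either the $\{v_{4k+1},v_{4k+2}\}$ pattern or the $\{v_{3k+1},v_{3k+2}\}$ pattern, once ``completed to the rest of $G'$,'' is a total dominating set of $G'$, let alone a \emph{minimal} one, and no counting is done to show the two resulting sets have different cardinalities. Proposition~\ref{prop:transmtdsg} cannot ``supply'' the completion you invoke: it concerns minimal transversals of $MTDS(G)$ and produces a single vertex whose neighborhood equals such a transversal; it is not a tool for extending a partial TDS on an arc to the whole graph. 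The difficulty is real, not just technical bookkeeping: after removing $N[A]$ for one or two edges of $C$, the surviving arc has only about $g-4\geq 9$ vertices, its vertices still have off-cycle neighbors of degree at least $3$ that must be dominated, and whatever dominates those neighbors may also dominate arc vertices and destroy minimality of the ``wasteful'' pattern. This is precisely why the pattern-counting argument of Hartnell and Rall needs girth at least $15$; pushing it down to $13$ is the whole content of the theorem, and the proposal leaves that content unproved. (The initial reduction to $\gamma_t(G)\geq 3$ and the observation that a vertex outside $N[\{u,v\}]$ has at most one neighbor inside it are fine, but they are peripheral.)

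For contrast, the paper's proof avoids constructing two competing TDSs altogether. It fixes a path $P=v_1,\dots,v_5$, stratifies $V(G)$ into distance classes $N_k$ from $P$, and uses $\delta(G)\geq 3$ together with $g(G)\geq 13$ to show every vertex of $N_k$ ($k\leq 4$) has a neighbor in $N_{k+1}$. Choosing for each $w_i\in N_2$ a neighbor $u_i\in N_3$ yields a set $A$ inducing a disjoint union of $K_2$'s with $N[A]\supseteq N_1\cup N_2\cup N_3$, so that $P$ becomes an entire connected component of $G-N[A]$ and no vertex of $G-N[A]$ is isolated. Lemma~\ref{lem:wtdreduction} then forces $G-N[A]$, hence its component $P_5$, to be WTD --- but $P_5$ is not WTD, giving the contradiction. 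If you want to salvage your route, you would need to replace the vague ``two densities'' step by a comparable structural reduction; as written, the heart of the proof is missing.
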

\begin{proof}
Assume that $G$ is a WTD graph with $\delta (G)\geq 3$ and $g(G)\geq 13$.
Let $P=v_1,v_2,v_3,v_4,v_5$ be a path in $G$.
For any vertex $v$ in $G$, let $d_P(v)=\min_{1\leq i \leq 5} dist(v,v_i)$.
Define $N_k$ to be the set of vertices $v$ with $d_P(v)=k$ for $k=1,2,\dots .$

First note that every vertex in $N_k$ has a neighbor in $N_{k-1}$ for every $k\geq 2$.
Moreover, for $k=1,2,3$, $N_k$ is an independent set since otherwise we obtain a cycle of length at most 11.
We will now show that for $k=1,\dots,4$, any vertex in $N_k$ has at least one neighbor in $N_{k+1}$. 
Suppose that there exist $k\leq 4$ and $v\in N_k$ such that $v$ is adjacent to no vertex in $N_{k+1}$.
By definition, it is clear that $v$ has no neighbor in $N_l$ for any $l\geq k+2$.
Therefore, all the neighbors of $v$ are in $\cup_{1\leq i \leq k} N_i$.
Thus, as $v$ has at least three neighbors, there exist three paths from $v$ to $P$ such that one of them has length $k$ and two of them have length at most $k+1$.
By a simple case analysis, considering the vertices of these paths on $P$ gives that there exist a cycle of length at most $2k+3\leq 11$, contradiction.

Now, let $N_2=\{w_1,\dots ,w_m\}$.
For every $i=1,\dots,m$, choose a neighbor of $w_i$ in $N_3$, say $u_i$.
Let $A=\cup_{i=1}^m \{w_i,u_i\}$.
For any $i\neq j$, $w_i$ is not adjacent to $u_j$ because otherwise we obtain a cycle of length at most 10.
Therefore, the induced subgraph of $G$ induced by $A$ is a disjoint union of $m$ $K_2$s.

Next, consider the graph $H=G-N[A]$.
Note that $N[A]$ consists of $N_1,N_2,N_3$ and some vertices in $N_4$.
Therefore, $P$ is a connected component of $H$.
As any vertex in $N_4$ has a neighbor in $N_5$, no vertex $v \in N_4 \cap V(H)$ is isolated in $H$.
Clearly, no vertex in $N_k$ with $k\geq 6$ is isolated in $H$ since it has a neighbor in $N_{k-1}$.
Suppose to the contrary that a vertex $v$ in $N_5$ is isolated in $H$.
Then $v$ has no neighbor in $N_5$ and $N_6$, and thus,
all its neighbors are in $N_4$.
Therefore, since there exist three paths from $v$ to $P$, this yields a cycle of length at most 12, contradiction. Consequently, $H$ has no isolated vertices and we can apply Lemma \ref{lem:wtdreduction} and conclude that $H$ is a WTD graph. However, $P$ is a component of $H$ and hence, it should be WTD as well.
Nevertheless, a path of length 4 is not a WTD graph (both $\{v_1,v_2,v_4,v_5\}$ and $\{v_2,v_3,v_4\}$ are minimal TDSs of $P$), contradiction.
\end{proof}

\section{Conclusion}
In this work, we studied graphs whose all minimal total dominating sets have the same size, a.k.a. well-totally-dominated graphs. We proved that well-totally-dominated graphs with bounded total domination number can be recognized in polynomial time. We then analyzed well-totally-dominated graphs with total domination number two for the special cases of triangle-free graphs and planar graphs. Finally, we focused on the girth of well-totally-dominated graphs. In particular, we proved that a well-totally dominated graph with minimum degree at least three has girth at most 12. We now conclude with several future research directions.

Although we proved in this paper that the problem of recognizing well-totally-dominated graphs with bounded total domination number can be solved in polynomial time, the complexity of the general case is an open research problem. Hence, we pose the following question:\\

\emph{Problem-1: What is the computational complexity of recognizing well-totally-dominated graphs?}\\

We have already characterized WTD(2) graphs with packing number $\rho(G)=2$ in Theorem \ref{cor:w2wtd2}. Since WTD(2) graphs have $\rho(G)\leq 2$, in order to complete the characterization of all WTD(2) graphs, it remains to answer the following question:\\

\emph{Problem-2: What are WTD(2) graphs  with  $\rho(G)=1$?}\\

Along the same line, one may consider to generalize our result in Theorem \ref{cor:w2wtd2}. It is well known that $\rho(G)\leq \gamma_t(G)\leq \Gamma_t(G)$; hence graphs with $\rho(G)=\Gamma_t(G)$ form a subclass of WTD graphs. This suggests our next open problem:\\

\emph{Problem-3: What are WTD(k) graphs  with  $\rho(G)=k$?}\\

Lastly, we have shown in Theorem \ref{thm:planar} that planar WTD(2) graphs with $\delta(G) \geq 3$ have at most $16$ vertices. Our intuition is that 16 is not a tight bound. Thus, we pose the following question:\\
 
\emph{Problem-4: Is the upper bound of 16 for the number of vertices of a planar WTD(2) graph with $\delta(G) \geq 3$ tight? Can we determine all (finitely many) planar WTD(2) graphs?}\\

\section*{Acknowledgments}
This work has been supported by the Scientific and Technological Research
Council of Turkey (TUBITAK) under grant no. 118E799. The work of Didem Gözüpek was also supported by the BAGEP Award of the Science Academy of Turkey.


\begin{thebibliography}{10}
	
	\bibitem{boros:1998}
	Endre Boros, Vladimir Gurvich, and Peter~L Hammer.
	\newblock Dual subimplicants of positive boolean functions.
	\newblock {\em Optimization Methods and Software}, 10(2):147--156, 1998.
	
	\bibitem{BCV20}
	Robert~C Brigham, Julie~R Carrington, and Richard~P Vitray.
	\newblock Connected graphs with maximum total domination number.
	\newblock {\em Journal of Combinatorial Mathematics and Combinatorial
		Computing}, 34:81--96, 2000.
	
	\bibitem{CDH80}
	Ernest~J Cockayne, RM~Dawes, and Stephen~T Hedetniemi.
	\newblock Total domination in graphs.
	\newblock {\em Networks}, 10(3):211--219, 1980.
	
	\bibitem{eiter:1995}
	Thomas Eiter and Georg Gottlob.
	\newblock Identifying the minimal transversals of a hypergraph and related
	problems.
	\newblock {\em SIAM Journal on Computing}, 24(6):1278--1304, 1995.
	
	\bibitem{fang:2004}
	Qizhi Fang.
	\newblock On the computational complexity of upper total domination.
	\newblock {\em Discrete applied mathematics}, 136(1):13--22, 2004.
	
	\bibitem{FHN88}
	A.~Finbow, B.~Hartnell, and R.~Nowakowski.
	\newblock Well-dominated graphs: a collection of well-covered ones.
	\newblock {\em Ars Combin}, 25:5--10, 1988.
	
	\bibitem{FFV09}
	Arthur Finbow, Allan Frendrup, and Preben~Dahl Vestergaard.
	\newblock {\em Total well dominated trees}.
	\newblock Department of Mathematical Sciences, Aalborg University, 2009.
	
	\bibitem{FiBo15}
	Stephen Finbow and Christopher~M van Bommel.
	\newblock Triangulations and equality in the domination chain.
	\newblock {\em Discrete Applied Mathematics}, 194:81--92, 2015.
	
	\bibitem{GKS11}
	Trevor~J Gionet, Erika~LC King, and Yixiao Sha.
	\newblock A revision and extension of results on 4-regular, 4-connected,
	claw-free graphs.
	\newblock {\em Discrete Applied Mathematics}, 159(12):1225--1230, 2011.
	
	\bibitem{BaGo18}
	Didem G{\"o}z{\"u}pek and Selim Bahad{\i}r.
	\newblock On a class of graphs with large total domination number.
	\newblock {\em Discrete Mathematics \& Theoretical Computer Science}, 20, 2018.
	
	\bibitem{gozupek:2017}
	Didem Gözüpek, Ademir Hujdurović, and Martin Milanič.
	\newblock {Characterizations of minimal dominating sets and the well-dominated
		property in lexicographic product graphs}.
	\newblock {\em {Discrete Mathematics \& Theoretical Computer Science}}, {Vol.
		19 no. 1}, August 2017.
	
	\bibitem{HaRa97}
	Bert Hartnell and DF~Rall.
	\newblock On graphs in which every minimal total dominating set is minimum.
	\newblock {\em Congressus Numerantium}, pages 109--118, 1997.
	
	\bibitem{Har99}
	BL~Hartnell.
	\newblock Well-covered graphs.
	\newblock {\em Journal of Combinatorial Mathematics and Combinatorial
		Computing}, 29:107--116, 1999.
	
	\bibitem{haynes:1998}
	Teresa~W Haynes, Stephen Hedetniemi, and Peter Slater.
	\newblock {\em Fundamentals of domination in graphs}.
	\newblock CRC press, 1998.
	
	\bibitem{HeYe13}
	Michael~A Henning and Anders Yeo.
	\newblock {\em Total domination in graphs}.
	\newblock Springer, 2013.
	
	\bibitem{HLX10}
	Xinmin Hou, You Lu, and Xirong Xu.
	\newblock A characterization of ($\gamma_t$, $2\gamma$)-block graphs.
	\newblock {\em Utilitas Mathematica}, 82:155--159, 2010.
	
	\bibitem{LeTa17}
	Vadim~E. Levit and David Tankus.
	\newblock Well-dominated graphs without cycles of lengths 4 and 5.
	\newblock {\em Discrete Math.}, 340(8):1793--1801, 2017.
	
	\bibitem{pfaff:1984}
	John Pfaff.
	\newblock Linear algorithms for independent domination and total domination in
	series-parallel graphs.
	\newblock {\em Congressus Numerantium}, 45:71--82, 1984.
	
	\bibitem{Plu93}
	Michael~D Plummer.
	\newblock Well-covered graphs: a survey.
	\newblock {\em Quaestiones Mathematicae}, 16(3):253--287, 1993.
	
	\bibitem{ToVo90}
	Jerzy Topp and Lutz Volkmann.
	\newblock Well covered and well dominated block graphs and unicyclic graphs.
	\newblock {\em Mathematica Pannonica}, 1(2):55--66, 1990.
	
\end{thebibliography}

\end{document}